\newtheorem{theorem}{Theorem}
\newtheorem{lemma}{Lemma}
\newtheorem{proposition}{Proposition}
\theoremstyle{definition}
\theoremstyle{definition}
\newtheorem{remark*}{Remark}
\theoremstyle{definition}
\newcommand{ \rn }[1]{\mathbb{R}^{#1}}
\title{The Broken Ray Transform on the Square}
\date{}
\author[M. Hubenthal]{Mark Hubenthal}
\address{Department of Mathematics and Statistics, University of Jyv\"askyl\"a}
\email{john.m.hubenthal@jyu.fi}
\subjclass[2010]{45Q05, 47G30, 53C65}
\keywords{inverse problems, integral geometry, microlocal analysis}
\begin{document}
\maketitle
\begin{abstract}We study a particular broken ray transform on the Euclidean unit square and establish injectivity and stability for $C_{0}^{2}$ perturbations of the constant unit weight. Given an open subset $E$ of the boundary, we measure the attenuation of all broken rays starting and ending at $E$ with the standard optical reflection rule. Using the analytic microlocal approach of Frigyik, Stefanov, and Uhlmann for the X-ray transform on generic families of curves, we show injectivity via a path unfolding argument under suitable conditions on the available broken rays. Then we show that with a suitable decomposition of the measurement operator via smooth cutoff functions, the associated normal operator is a classical pseudo differential operator of order $-1$ plus a smoothing term with $C_{0}^{\infty}$ Schwartz kernel, which leads to the desired result.\end{abstract}

\section{Introduction \label{sec:intro}}
In this article, we are interested in the questions of injectivity and stability for a particular kind of Euclidean broken ray transform on the unit square in $2$ dimensions. Much work has already been done previously with regard to the X-ray transform and its attenuated analogue. In \cite{novikov, novikov2}, Novikov presents an explicit reconstruction formula for the attenuated ray transform in $2$ dimensions and derives specific conditions for the range in terms of certain integral relations. Bal subsequently derived in \cite{balradon} a specific reconstruction scheme based on the inversion formula of Novikov, which exploits some redundancies in the data and even considers the case of an angularly varying source. Frigyik, Stefanov, and Uhlmann showed injectivity and stability for the X-ray transform over generic families of curves on smooth manifolds in \cite{xraygeneric} where the attenuation is close to real analytic. Previously, Stefanov and Uhlmann in \cite{xraytensor} analyzed the linearized boundary rigidity problem which leads to the geodesic X-ray transform of symmetric 2-tensor fields. They showed in particular that the corresponding normal operator is a pseudodifferential operator of order $-1$ if the manifold is simple.  We also refer to (\cite{boman, finch1, finch2, natterer}) for further background and results on X-ray tomography. 

Recall the unattenuated Euclidean X-ray transform of an unknown function $f \in L^{2}(\Omega)$ supported on some bounded domain $\Omega$, which is given by
\begin{equation*}
Xf(x,\theta) = \int_{\mathbb{R}}f(x+t\theta)\,dt, \quad (x,\theta) \in \Omega \times \mathbb{S}^{n-1}.
\end{equation*}
Roughly speaking, the main condition needed for injectivity of $X$ is that the union of the conormal bundles over all available lines must span the cotangent space of the domain (see \cite{xraygeneric}). In the case of the geodesic X-ray transform, this condition holds if all geodesics have no conjugate points, which in turn is true if one assumes that the underlying manifold is simple. But of course such a condition trivially holds for lines in the Euclidean setting. We do note, however, that such a condition was relaxed more recently when Stefanov and Uhlmann considered the case of microlocal invertibility for the geodesic ray transform with possible isolated conjugate points along certain geodesics, i.e. in the presence of fold caustics (\cite{causticxray}).  

The primary tool used to prove injectivity in \cite{xraygeneric} for the generalized X-ray transform, and more specifically to recover the singularities of an unknown interior distribution $f$, is analytic microlocal analysis. While the standard $C^{\infty}$ pseudodifferential calculus allows one to use the normal operator to determine $f$ modulo a smooth remainder, the analytic wavefront set is a more deterministic object. For if an unknown function $f$ supported in $\Omega$ satisfies $WF_{A}(f) \cap T^{*}\Omega = \emptyset$, then $f$ must be entire and compactly supported, and hence must vanish identically. We employ such ideas heavily in this work. However, it should be stated that other results pertaining to the attenuated ray transform or geodesic ray transform have utilized different methods.

The framework of this paper is of a relatively simple geometric nature, since few results have yet been attained for the problem we will consider. Roughly speaking, if $(\Omega, g)$ is a compact Riemannian manifold with appropriate conditions, and $E \subset \partial \Omega$ is open, the question is the following: do the integrals of an unknown function $f$ along all broken rays starting and ending on $E$, and which reflect only on $\partial \Omega \setminus E$, uniquely determine $f$? Unfortunately, reflection tomography of this nature for non-Euclidean metrics has not yet been treated in general. The technique of \cite{xraygeneric} relies on constructing local coordinates about some fixed geodesic $\gamma_{0}$ in order to show that the normal operator has nice microlocal properties. But, if one were to reflect from the boundary the family of geodesics associated with such coordinates, one no longer has local coordinates around the corresponding reflected segment of $\gamma_{0}$. As such, we will focus just on the Euclidean case, and in particular we will consider the unit square in $\rn{2}$. It is the author's belief, however, that the results herein are generalizable to at least to convex domains in $\rn{n}$ with piecewise flat reflective boundaries.

Although this problem is relatively new in its consideration, there are some previously existing results which are somewhat related. In particular, Eskin proved a uniqueness result for $C^{\infty}$ unknown functions in $2$-dimensions given knowledge of broken rays that reflect off of known convex obstacles in the domain's interior (\cite{eskin}). There the weight function is taken to be a smooth electric potential $V(x)$. One of the main tools used in the proof is a differential identity of Mukhometov (\cite{mukhometov}), specific to two dimensions, which involves the integrals of the potential along broken rays. Furthermore, the recent work of Ilmavirta in \cite{joonas} proves an injectivity result on the disk for open subsets $E$ of the boundary when the unknown function $f$ is uniformly quasianalytic in the angular variable (when written in polar coordinates). 

Finally, we mention some recent work of Salo and Kenig, which provides explicit motivation for the consideration of such a broken ray transform in the first place (\cite{salokenig}). There the authors consider the recovery of a Schr\"odinger potential on a particular class of compact manifolds from partial Cauchy data, deemed \textit{conformally transversally anisotropic}. This of course directly relates to the Calder\'on problem with partial data. In short, they consider the problem of recovering an unknown potential $q \in L^{\infty}(\Omega)$ given the partial Cauchy data set
\begin{align*}
\mathcal{C}_{q}^{\Gamma_{D}, \Gamma_{N}} & = \{ (u \vert_{\Gamma_{D}}, \partial_{\nu}u \vert_{\Gamma_{N}}) \, | \, (-\Delta + q)u = 0 \mathrm{ in }\Omega,\, u \in H_{\Delta}(\Omega),\\
& \qquad \qquad \qquad \qquad \quad \mathrm{supp}(u \vert_{\partial \Omega}) \subset \Gamma_{D} \}.
\end{align*}
Here $H_{\Delta}(\Omega) = \{ u \in L^{2}(\Omega) \, | \, \Delta u \in L^{2}(\Omega)\}$ and $\Gamma_{D}, \Gamma_{N}$ are two (possibly very small) open subsets of $\partial \Omega$. One very important point to note is that, differing from previous results for the Calder\'on problem with partial data, the authors here do not assume much about where the Neumann and Dirichlet data are accessible. Indeed, although numerous partial data results are known for the Schr\"odinger equation, each of them have to make inequivalent assumptions about the size of the accessible and inaccessible sets on the boundary. The authors then consider the case when $\Omega$ is a Riemannian manifold $(M,g) \Subset (\mathbb{R} \times \widetilde{M}_{0}, g)$ where $g = c(e \oplus g_{0})$, $(\widetilde{M}_{0}, g_{0})$ is some compact $(n-1)$-dimensional simple manifold with boundary, and $e$ is the euclidean metric on the real line. There is also one further technical assumption that
\begin{equation*}
(M,g) \subset (\mathbb{R} \times M_{0}, g) \Subset (\mathbb{R} \times \widehat{M}_{0}, g)
\end{equation*}
where $M_{0}$ is a compact $(n-1)$-dimensional manifold with smooth boundary and
\begin{equation*}
\partial M \cap (\mathbb{R} \times \partial M_{0}) \neq \emptyset.
\end{equation*}

One then has the limiting Carleman weight $\phi(x) = x_{1}$ to work with, which allows for decomposing the boundary $\partial M$ as the disjoint union
\begin{equation*}
\partial M = \partial M_{+} \cup \partial M_{-} \cup \partial M_{\mathrm{tan}},
\end{equation*}
where
\begin{align*}
\partial M_{\pm} & = \{ x \in \partial M \, | \, \pm \partial_{\nu} \phi(x) > 0\},\\
\partial M_{\mathrm{tan}} & = \{x \in \partial M \, | \, \partial_{\nu}\phi(x) = 0 \}.
\end{align*}
The authors of \cite{salokenig} then prove the following: let $\Gamma_{i} \subset \partial M_{\mathrm{tan}}$ such that there exists a nonempty open subset $E$ of $\partial M_{0}$, and suppose $\Gamma_{D} = \Gamma_{N} = \Gamma$ for some neighborhood $\Gamma$ in $\partial M$ of the set $\overline{\partial M_{+} \cup \partial M_{-} \cup \Gamma_{a}}$ for $\Gamma_{a} = \partial M_{\mathrm{tan}} \setminus \Gamma_{i}$. If $q_{1},q_{2}$ are two potentials such that $\mathcal{C}_{g,q_{1}}^{\Gamma_{D},\Gamma_{N}} = \mathcal{C}_{g,q_{2}}^{\Gamma_{D},\Gamma_{N}}$, then
\begin{equation*}
\int_{0}^{L}e^{-2\lambda t}( c(q_{1} - q_{2})\,{\hat{}}\,(2\lambda, \gamma(t))\,dt = 0
\end{equation*}
for all nontangential broken rays $\gamma: [0,L] \to M_{0}$ with endpoints in $E$ and for any $\lambda \in \mathbb{R}$. Here $( \cdot )\,{\hat{}}$ denotes the Fourier transform with respect to $x_{1}$.
Thus, the unique determination of the potential from such partial data could be resolved if one can show injectivity of the broken ray transform considered here. Ultimately, the big hope is that the collection of currently existing partial data results for the Calder\'on problem can be more unified, perhaps by using this (or some other) new approach to the problem.

The structure of this paper goes as follows. In \S \ref{sec:mainresults} we describe the problem and notations and then state the main results. In order to simplify the presentation, we start with the unattenuated case which is all that's needed to state the injectivity result. We subsequently introduce the broken ray transform with general attenuation and state a stability result for attenuation coefficients close to $0$ in $C^{2}$. \S \ref{sec:squareinjective} applies the microlocal ideas of \cite{xraygeneric} together with the path unfolding technique to prove injectivity with no attenuation. In \S \ref{sec:stability} we show the details required to prove the stability estimate of the inverse problem. To this end, \S \ref{sec:normaloperatorcompute}, \ref{sec:normalsimplify} and \ref{sec:corners} all deal with computations of the localized normal operator for different choices of neighborhoods of broken rays. Finally, we extend the stability estimate and injectivity to $C^{2}$ perturbations of the attenuation $\sigma$ in \S \ref{sec:reducingsmoothness}.

\subsection*{Acknowledgements}Support by the Institut Mittag-Leffler (Djursholm, Sweden) is gratefully acknowledged, as this work was completed there. This research was also conducted with partial support from the Academy of Finland while at the University of Jyv\"askyl\"a. The author would like to thank Mikko Salo for introducing him to
this problem and for providing helpful comments and feedback.

\section{Statement of Main Results \label{sec:mainresults}}
We will state the problem setup and definition in the case of a general domain $\Omega$ with smooth boundary. However, the main theorems themselves assume further that $\Omega$ is the square. Let $\Omega$ be a smooth bounded domain in $\rn{n}$ and define $\Gamma_{\pm} = \{(x,\theta) \in \partial \Omega \times \mathbb{S}^{n-1} \, | \, \pm \nu(x) \cdot \theta >0\}$ as the set of outgoing and ingoing unit vectors on $\partial \Omega$, respectively. Here $\nu(x)$ is the outward unit normal vector to $\partial \Omega$. We also define, for a general open subset $E \subset \partial \Omega$, the set
\begin{equation}
\Gamma_{\pm}(E) := \{(x,\theta) \in S\Omega \, | \, x \in E, \pm \theta \cdot \nu(x) > 0\}.
\end{equation}
Assume that $\sigma \in C^{\infty}(\Omega \times \mathbb{S}^{n-1})$ and vanishes near $\partial \Omega$.

We will call any unit speed curve $\gamma_{x,\theta}$ a \textit{regular broken ray} in $\Omega$ if
\begin{enumerate}
\item[(a)] its associated tangent vector field $(\gamma_{x,\theta}(t), \dot{\gamma}_{x,\theta}(t))$ intersects $\Gamma_{-}(E)$\\
\item[(b)] it consists of finitely many straight line segments $\gamma_{x,\theta,1}, \gamma_{x,\theta,2}, \ldots, \gamma_{x,\theta,N}$ before hitting $E$ again\\
\item[(c)] it does not intersect $\partial E$, and\\
\item[(d)] it obeys the geometrical optics reflection law 
\begin{equation}
\dot{\gamma}_{x,\theta,j+1}(0) = \dot{\gamma}_{x,\theta,j}(L_{j}) - 2\left\langle \nu(\gamma_{x,\theta,j}(L_{j})),\dot{\gamma}_{x,\theta,j}(L_{j}) \right\rangle \nu(\gamma_{x,\theta,j}(L_{j})), \,1 \leq j \leq N.
\end{equation}
\end{enumerate}
Here $L_{j}$ denotes the length of $\gamma_{x,\theta,j}$ and the subscripts refer to the fact that $(\gamma_{x,\theta}(0), \dot{\gamma}_{x,\theta}(0)) = (x,\theta)$.

We use the notation $T:\Gamma_{-} \to \Gamma_{-}$ to denote the billiard map mapping a vector $(x,\theta) \in \Gamma_{-}$ to $(x',\theta') \in \Gamma_{-}$, where $x'$ is the intersection point of the geodesic $\gamma_{x,\theta}$ with $\partial \Omega$, and $\theta'$ is the reflected direction. We use the maps $\pi_{1},\pi_{2}$ to denote the projections $\Gamma_{-} \to \partial \Omega$ and $\Gamma_{-} \to \mathbb{S}^{n-1}$, respectively. Also $\tau_{\pm}(x,\theta) = \min \{ t \geq 0 \, | \, x + t\theta \in \partial \Omega\}$ are the positive and negative travel times from a given unit tangent vector $(x,\theta) \in S\Omega$, and $N(x,\theta)$ for $(x,\theta) \in \Gamma_{-}(E)$ will denote the number of reflections of the given trajectory before intersecting $E$. We use the notation $g^{\#}(x,\theta) := g(x + \tau_{-}(x,\theta)\theta, \theta)$ to denote the extension of a function $g$ on $\Gamma_{-}$ to $\Omega \times \mathbb{S}^{n-1}$. 

\subsection{Unattenuated Case}
First we consider the unattenuated broken ray transform $I_{0,E}$ with respect to the accessible set $E$, so as not to be distracted by technicalities. The operator $I_{0,E}:L^{2}(\Omega) \to L^{2}(\Gamma_{-}, d\Sigma)$, where $d\Sigma = |\nu(x) \cdot \theta|\, dS(x)\, d\theta$ is the standard measure on $\Gamma_{-}$, is defined by
\begin{equation}
I_{0, E}f(\gamma_{x,\theta}) = I_{0, E}f(x,\theta) := \sum_{j=0}^{N(x,\theta)}\int_{\rn{+}} f(\pi_{1} \circ T^{j}(x,\theta) + t \pi_{2}\circ T^{j}(x,\theta))\,dt. \label{forwardoperator}
\end{equation}
for all regular broken rays $\gamma_{x,\theta}$. Here 
\begin{equation}
(z_{j}(x,\theta),\theta_{j}(x,\theta)) = T^{j}(x+\tau_{-}(x,\theta)\theta,\theta), \quad (x,\theta) \in S\Omega. \label{eq:zjthetaj}
\end{equation}

Since in general there may be regular broken rays in $\Omega$ with arbitrarily many reflections, we cannot expect for $I_{0,E}:L^{2}(\Omega) \to L^{2}(\Gamma_{-}, d\Sigma)$ to be a bounded operator. Thus, we will instead consider the cutoff operator $\alpha I_{0, E}$, where $\alpha:\Gamma_{-}\to \mathbb{R}$ is a smooth cutoff function supported on some subset of broken rays that have at most $N_{max} \geq 0$ reflections. The boundedness of this cutoff operator, with possible attenuation, will be proved later when we introduce the more general weighted transform.

\begin{remark*}Although general polyhedral domains lack obvious reflection rules at the corners (some authors ignore such trajectories and some define them in terms of limits of regular trajectories, e.g. \cite{eskin}), the special symmetrical nature of the square does give an unambiguous rule for such reflections. But one still runs into the problem that the composite functions $z_{j}(x,\theta), \theta_{j}(x,\theta)$ are not real analytic for $(x,\theta)$ associated with a corner point. One simple, albeit more restrictive route, is to assume that the subset $E$ also includes a small neighborhood of each corner. In that case, all broken rays with corner points are just ballistic X-rays. More generally, one can cut out the rays with corner points from the measurement operator. However, if one assumes the attenuation coefficient to vanish in a neighborhood of the boundary, it is still possible to treat rays with corners as we will discuss later.\end{remark*}

It is easy to see that $N(x,\theta)$ is piecewise constant, which leads to the unfortunate fact that $I_{0, E}$ in general does not preserve the smoothness of a function. Since we are considering a convex domain (flat) in this work, the discontinuities of $N(x,\theta)$ occur only for $(x,\theta) \in \Gamma_{-}(E)$ such that $\gamma_{x,\theta}$ has a reflection occurring at some point in $\partial E$. This is the reason they are not classified as \textit{regular} broken rays. To eliminate such discontinuities introduced by $N(x,\theta)$ we will use a collection of smooth cutoff functions $\alpha_{k}$ on $\Gamma_{-}(E)$ so as to remove the bad parts of $I_{0, E}$. For example, for each point $(x_{0},\theta_{0}) \in \Gamma_{-}(E)$ where $N(x,\theta)$ is continuous (i.e. $\gamma_{x_{0},\theta_{0}}$ and nearby rays do not touch corner points), we may multiply $I_{\sigma, E}$ by a smooth function $\alpha(x,\theta) \equiv 1$ near $(x_{0}, \theta_{0})$ and such that $N(x,\theta) \vert_{\mathrm{supp}(\alpha)}$ is constant. This localized operator will have a constant number of reflections and thus behaves very much like the usual generalized ray transform, which makes it possible to apply the microlocal analytic techniques of \cite{xraygeneric}. Of course, additional care is needed to deal with broken rays that contain corner points, which we discuss later. The central idea is that for fixed $(x,\xi) \in T^{*}\Omega \setminus 0$, any broken ray $\gamma$ passing through $x$ with direction normal to $\xi$ will allow for the detection of $(x,\xi)$ in $\mathrm{WF}_{A}(f)$. From hereon we use the notation $\mathrm{WF}_{A}(f)$ to denote the analytic wavefront set of $f$.

The main idea we use in order to apply microlocal analytic techniques to the operator is a well-known method of billiards called path unfolding. Essentially, we can unfold any broken ray in $\Omega$ into a straight line on a different domain $\widetilde{\Omega}$ defined by reflecting $\Omega$ across each of its edges repeatedly to obtain a tiling of $\rn{2}$. Similarly, we can extend $f$ to a function $\widetilde{f}$ on $\widetilde{\Omega}$ by reflection across $\partial \Omega$. We will use indices $(l_{1},l_{2}) \in \mathbb{Z}^{2}$ to denote how many horizontal and vertical reflections a given broken ray undergoes on the unfolded domain $\widetilde{\Omega}$ before returning to the set $E$. We call this the \textit{reflection signature} of a path. For example, $(-1,2)$ indicates $1$ reflection to the left and $2$ reflections upwards. The following transformations and their respective inverses will be useful when translating between vectors on
$T\Omega$ and $T \widetilde{\Omega}$. Given a reflection signature $(l_{1},l_{2})$, we can compute the reflected version of a point $w \in \Omega$ by
\begin{align}
\widetilde{w} = R_{l_{1},l_{2}}(w) & = \left( l_{1} + \frac{1 - (-1)^{l_{1}}}{2} + (-1)^{l_{1}}w_{1}, l_{2} + \frac{1 - (-1)^{l_{2}}}{2} + (-1)^{l_{2}}w_{2} \right)\\
w = R_{l_{1},l_{2}}^{-1}(\widetilde{w}) & = \left( (-1)^{l_{1}+1}l_{1} + \frac{1-(-1)^{l_{1}}}{2} + (-1)^{l_{1}}\widetilde{w}_{1}, \, (-1)^{l_{2}+1}l_{2} + \frac{1-(-1)^{l_{2}}}{2} + (-1)^{l_{2}}\widetilde{w}_{2} \right).
\end{align}
Similarly, given an angle $\theta$, $S_{l_{1},l_{2}}(\theta)$ is the angle after an $(l_{1},l_{2})$ reflection. 
\begin{align}
\eta = S_{l_{1},l_{2}}(\theta) &= (-1)^{l_{1}+l_{2}}\theta + \frac{\pi}{2}\left[ 1 - (-1)^{l_{1}} \right]\\
\theta = S_{l_{1},l_{2}}^{-1}(\eta) & = (-1)^{l_{2}}\left( (-1)^{l_{1}} \eta + \frac{1-(-1)^{l_{1}}}{2}\pi \right).
\end{align}
Such formulas can be verified with straightforward geometry. Using this notation, we have
\begin{equation}
\widetilde{f}(x) = \left\{ \begin{array}{ll}
f(R_{l_{1},l_{2}}^{-1}(x)) & \textrm{ if }x \in [l_{1},l_{1}+1] \times [l_{2},l_{2}+1]\\
& (l_{1},l_{2}) \in \mathbb{Z}^{2},
\end{array}\right.
\end{equation}
which obviously makes sense even for compactly supported distributions in $\Omega$.

\begin{figure}
\centering
\def \svgwidth{0.7\columnwidth}
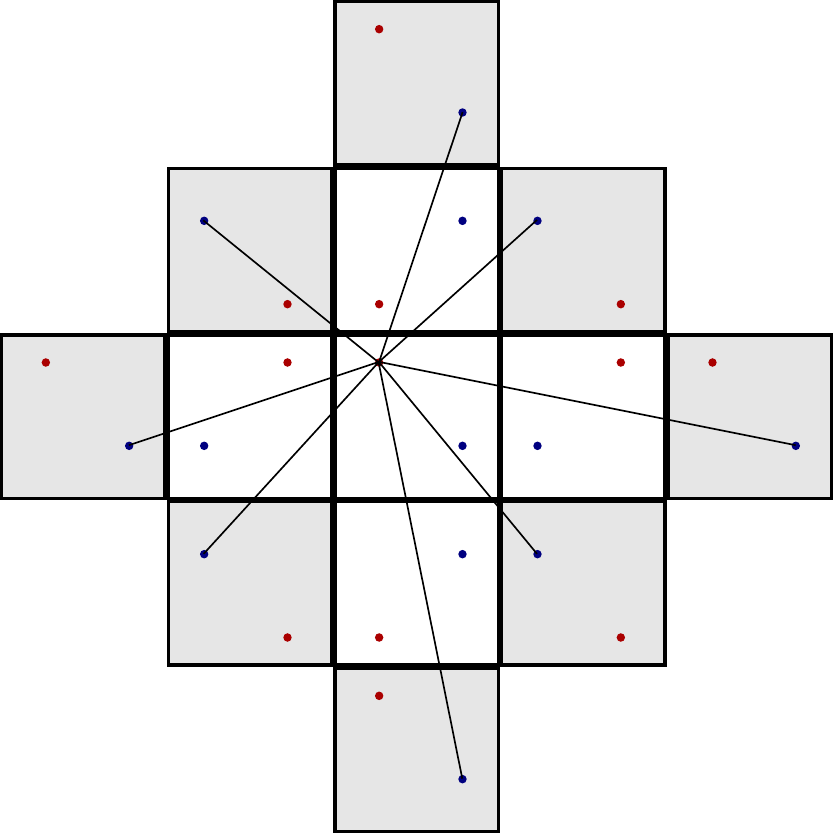
\caption{$2$-reflected unfolded paths from $x$ to $y$.\label{unfoldedpaths}}
\end{figure}

Note that one can also write $I_{0,E}f$ in the form
\begin{equation}
I_{0, E}f(x,\theta) = \int_{0}^{L(\gamma_{x,\theta})}\widetilde{f}(x+t\theta)\,dt,
\end{equation}
which resembles the usual X-ray transform. This allows for the treatment of compactly supported distributions $f \in \mathcal{E}'(\Omega)$ in Proposition \ref{thm:microlocalsquare} similar to in the analogous result of \cite{xraygeneric}. In particular, let $\alpha(x,\theta)$ be a smooth cutoff function on $\Gamma_{-}$ that localizes around some neighborhood of broken rays with the same reflection signature. Because $\widetilde{f}$ vanishes outside of the extended domain $\widetilde{\Omega}$, we may assume all broken rays in the support of $\alpha$ have the same length $L$, by extending them outside if necessary. Note that such rays may contain a corner point. We can then make sense of $\alpha I_{0,E}f$ where $f$ is a distribution. Specifically, if $f \in C_{0}^{\infty}(\Omega)$ and $\phi \in C_{0}^{\infty}(\rn{2})$ we have
\begin{align*}
 \int_{\Gamma_{-}} \alpha(x,\theta)I_{0,E}f(x,\theta)\phi(x,\theta)\,d\Sigma & = \int_{\Gamma_{-}}\alpha(x,\theta) \left( \int_{0}^{L}\widetilde{f}(x + t\theta)\,dt \right) \phi(x,\theta)\,d\Sigma\\
 & = \int_{\rn{2} \times \mathbb{S}^{1}}\alpha^{\#}(z,\theta) \phi^{\#}(z,\theta)\widetilde{f}(z) \,dz\,d\theta.
\end{align*}
Thus,
\begin{equation}
\langle \alpha I_{0,E}f, \phi \rangle = \left\langle \widetilde{f}, \int_{\mathbb{S}^{1}}\alpha^{\#}(\cdot, \theta)\phi^{\#}(\cdot, \theta)\,d\theta \right \rangle,
\end{equation}
which is well defined for $f \in \mathcal{E}'(\Omega)$. Note that we only need $\widetilde{f}$ to be well-defined on the support of $\alpha^{\#}$. On the square this is inconsequential since $\widetilde{f}$ is always uniquely defined on all of $\rn{2}$. However, this could prove useful if dealing with more general domains in future work.

For a given $N_{max} \in \mathbb{N}$, we define the \textit{visible set} by
\begin{align}
\mathcal{M} & := \{x \in \Omega \, | \, \forall \xi \in \mathbb{S}^{1}, (x,\xi) \in N^{*}\gamma \textrm{ for some regular broken ray $\gamma$}\\
& \hspace{7cm} \textrm{ with $N(\gamma) \leq N_{max}$}\}. \notag
\end{align}
\begin{remark*}
One simple example of what $\mathcal{M}$ looks like is the case when $E$ contains two adjacent edges of the square $\Omega = (0,1)^2$. Then $\mathcal{M} = \Omega$.

Similarly, if $\Omega = \{x \in \rn{n} \, | \, |x| \leq 1 \}$, then for $E$ containing a hemisphere we have $\mathcal{M} = \Omega$.

In general, if $E = \bigcup_{j}E_{j}$ is a union of connected open subsets $E_{j}$ of the boundary, then we always have $\bigcup_{j} \mathrm{ch}(E_{j})\subset \mathcal{M}$, where $\mathrm{ch}$ denotes the closed convex hull. However, this is the part of $\mathcal{M}$ that is attributed to only ballistic (unreflected) X-rays.
\end{remark*}

We then have the following injectivity result for the 2-dimensional square with Euclidean broken rays:
\begin{theorem}Let $E \subset \partial \Omega$ be open, let $N_{max} \in \mathbb{N}$, let $\mathcal{M}$ be the corresponding visible set,
and let $K \Subset \mathcal{M}$. Then $I_{0, E}$ is injective on $L^{2}(K)$. \label{thm:injectivesquare}\end{theorem}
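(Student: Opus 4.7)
The plan is to show that if $f \in L^{2}(K)$ satisfies $I_{0,E}f = 0$, then $\mathrm{WF}_{A}(f)$ is empty over every point of $\mathcal{M}$, so $f$ is real analytic on $\mathcal{M}$, and then to conclude $f \equiv 0$ from $\mathrm{supp}(f) \subset K \Subset \mathcal{M}$ by analytic continuation. The three main ingredients are the unfolding identity $I_{0,E}f(x,\theta) = \int_{0}^{L(\gamma_{x,\theta})}\widetilde{f}(x + t\theta)\,dt$ (valid because $\sigma = 0$ gives $\widetilde{w} \equiv 1$), smooth cutoffs $\alpha$ on $\Gamma_{-}(E)$ that isolate broken rays of a fixed regular reflection signature, and the analytic microlocal regularity theorem of Frigyik--Stefanov--Uhlmann from \cite{xraygeneric} for generalized Radon transforms.

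Fix $x_{0} \in K$ and $\xi_{0} \in \mathbb{S}^{1}$. By the definition of $\mathcal{M}$, there is a regular broken ray $\gamma_{0}$ through $x_{0}$ with $\dot{\gamma}_{0} \perp \xi_{0}$, reflection signature $(l_{1},l_{2})$, at most $N_{max}$ reflections, and initial vector $(y_{0},\theta_{0}) \in \Gamma_{-}(E)$. Because $\gamma_{0}$ is regular, its reflections avoid the corners of $\partial \Omega$ and the set $\partial E$, so $N(x,\theta)$ is constant on a neighborhood $U$ of $(y_{0},\theta_{0})$. Choose $\alpha \in C_{c}^{\infty}(\Gamma_{-}(E))$ with $\alpha \equiv 1$ near $(y_{0},\theta_{0})$ and $\mathrm{supp}(\alpha) \subset U$. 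Unfolding via $R_{l_{1},l_{2}}$ and $S_{l_{1},l_{2}}$ converts the localized data $\alpha \cdot I_{0,E} f$ into a standard weighted Euclidean X-ray integral of $\widetilde{f}$ along a smooth analytic family of straight line segments in a neighborhood of the unfolded segment $\widetilde{\gamma}_{0} \subset \widetilde{\Omega}$, with real analytic (in fact constant) weight.

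Since the curve family is analytic and straight lines in $\mathbb{R}^{2}$ trivially have no conjugate points, the analytic microlocal theorem of \cite{xraygeneric} yields $(\widetilde{x}_{0}, \widetilde{\xi}_{0}) \notin \mathrm{WF}_{A}(\widetilde{f})$, where $\widetilde{x}_{0}$ and $\widetilde{\xi}_{0}$ are the unfolded images of $x_{0}$ and $\xi_{0}$, so that $\widetilde{\xi}_{0}$ is conormal to $\widetilde{\gamma}_{0}$. Because unfolding is a local Euclidean isometry on each tile, this implies $(x_{0},\xi_{0}) \notin \mathrm{WF}_{A}(f)$. As $\xi_{0}$ was arbitrary, $f$ is real analytic in a neighborhood of $x_{0}$; applying the same reasoning at every point of $\mathcal{M}$ shows that $f$ is real analytic throughout the open set $\mathcal{M}$. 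Because $K \Subset \mathcal{M}$, the open set $\mathcal{M} \setminus K$ has nonempty intersection with every connected component of $\mathcal{M}$ that meets $\mathrm{supp}(f)$; on this set $f$ vanishes by the support hypothesis, so analytic continuation forces $f \equiv 0$ on each such component, hence on $K$.

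The main obstacle I anticipate is rigorously verifying that the localized, unfolded operator satisfies the hypotheses of the Frigyik--Stefanov--Uhlmann theorem on a full open neighborhood of the relevant phase-space point: one must check that after applying the cutoff $\alpha$ and the reflection maps, both the resulting family of straight lines and the effective weight are real analytic in the parameters. This requirement is exactly what motivates the definition of a regular broken ray (excluding corner reflections and rays through $\partial E$) and the choice $\sigma = 0$ (so the effective weight is constant and trivially analytic). Rays that unavoidably pass through a corner would require a separate cutoff-and-bypass construction, as flagged in the earlier remark, but the visibility hypothesis supplies at least one regular broken ray in every conormal direction over $\mathcal{M}$, so the injectivity argument itself never needs them.
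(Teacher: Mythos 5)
Your proposal is correct and follows essentially the same route as the paper: localize with a smooth cutoff to broken rays of fixed reflection signature, unfold to straight lines in $\widetilde{\Omega}$, invoke the analytic microlocal regularity result of \cite{xraygeneric} to remove $N^{*}\gamma$ from $\mathrm{WF}_{A}(f)$ for every regular broken ray, and conclude by analytic continuation from the compact support in $K \Subset \mathcal{M}$. The only minor difference is that the paper's microlocal proposition also accommodates broken rays containing corner points by adding extra reflected tiles, whereas you sidestep such rays entirely; this does not affect the argument.
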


\subsection{Attenuated Case}
Now we would like to make sense of nonzero attenuation in the definition of the broken ray transform with respect to $E \subset \partial \Omega$. We can then get injectivity and stability for attenuations which are near $0$ in $C^{2}$ and which vanish in a neighborhood of $\partial \Omega$.

For the moment, consider an attenuation function $\sigma \in C_{0}^{\infty}(\Omega \times \mathbb{S}^{1})$ which vanishes near $\partial \Omega$. The broken ray transform with respect to $E\subset \partial \Omega$ and with attenuation $\sigma$, denoted by $I_{\sigma, E}:L^{2}(\Omega) \to L^{2}(\Gamma_{-}, d\Sigma)$, is defined as
\begin{align}
& \quad I_{\sigma, E}f(\gamma_{x,\theta}) = I_{\sigma, E}f(x,\theta) \notag \\
& := \sum_{j=0}^{N(x,\theta)}\int_{\rn{+}} \exp\left( - \sum_{m=0}^{j-1}\int_{\rn{+}}\sigma(\pi_{1} \circ T^{m}(x,\theta) + \tau \pi_{2}\circ T^{m}(x,\theta), \pi_{2} \circ T^{m}(x,\theta))\,d\tau\right) \notag\\
& \qquad \cdot \exp\left(-\int_{\rn{+}}\sigma(\pi_{1}\circ T^{j}(x,\theta) + (t-\tau)\pi_{2}\circ T^{j}(x,\theta), \pi_{2} \circ T^{j}(x,\theta) )\,d\tau\right) \notag \\
& \qquad \cdot f(\pi_{1} \circ T^{j}(x,\theta) + t \pi_{2}\circ T^{j}(x,\theta))\,dt. \label{forwardoperator}\\
& = \sum_{j=0}^{N(x,\theta)}\int_{\rn{+}} \left[w_{j}f\right](\pi_{1} \circ T^{j}(x,\theta) + t \pi_{2}\circ T^{j}(x,\theta), \pi_{2}\circ T^{j}(x,\theta))\,dt. \notag\\
& = \sum_{j=0}^{N(x,\theta)}\int_{\rn{+}} \left[w_{j}f\right](z_{j}(x,\theta) + t \theta_{j}(x,\theta), \theta_{j}(x,\theta))\,dt. \notag
\end{align}
for all regular broken rays $\gamma_{x,\theta}$. The functions $z_{j}, \theta_{j}$ are defined in (\ref{eq:zjthetaj}). The weight functions $w_{j}$ on $S\Omega$ are given by
\begin{align}
w_{j}(y,\eta) & = \exp\left( -\sum_{m=0}^{j-1} \int_{\mathbb{R}_{+}}\sigma( z_{m-j}+\tau \theta_{m-j},\theta_{m-j})\,d\tau\right) \exp\left( -\int_{\mathbb{R}_{+}}\sigma(y - \tau \eta, \eta)\,d\tau \right)\notag\\
& = \exp\left( -\sum_{m=1}^{j} \int_{\mathbb{R}_{+}}\sigma( z_{-m}+\tau \theta_{-m},\theta_{-m})\,d\tau\right) \exp\left( -\int_{\mathbb{R}_{+}}\sigma(y - \tau \eta, \eta)\,d\tau \right)\notag
\end{align}
It is important to note that because $\sigma$ vanishes near $\partial \Omega$, the weight functions $w_{j}$ are smooth. The reason is that we can circumscribe $\Omega$ with the ball $B = B\left( \left( \frac{1}{2}, \frac{1}{2} \right), \frac{\sqrt{2}}{2} \right)$ and then replace $z_{m-j}(y,\eta) = \pi_{1} \circ T^{m-j}(y+ \tau_{-}(y,\eta)\eta,\eta)$ with the last point of $\partial B$ that intersects the line $z_{m-j}(y,\eta) + t\theta_{m-j}(y,\eta)$ for $t \leq 0$. This point is smoothly dependent on $(y,\eta) \in \Omega \times \mathbb{S}^{n-1}$. 

Just as we could rewrite the unattenuated broken ray transform as a standard X-ray transform for the augmented function $\widetilde{f}$, we can do the same for the attenuated case. First we define an extension of $\sigma$ on $\widetilde{\Omega}\times \mathbb{S}^{1}$ by
\begin{equation}
\widetilde{\sigma}(x,\theta) = \left\{ \begin{array}{ll}
\sigma( R_{l_{1},l_{2}}^{-1}(x), S_{l_{1},l_{2}}^{-1}(\theta)) & \textrm{ for }x \in [l_{1},l_{1}+1] \times [l_{2},l_{2}+1],\\
&  \quad (l_{1},l_{2}) \in \mathbb{Z}^{2}.
\end{array}\right.
\end{equation}
Then $\widetilde{w}(x,\theta) := \exp\left( -\int_{0}^{L(\gamma_{x,-\theta})}\widetilde{\sigma}(x-\tau\theta,\theta)\,d\tau\right)$ is the associated weight for $(x,\theta) \in \widetilde{\Omega} \times \mathbb{S}^{1}$. We've used the notation $L(\gamma_{x,-\theta})$ to mean the length of the half broken ray with initial condition $(x,-\theta)$ before hitting some copy of $E$ in $\widetilde{\Omega}$. Obviously, $\sigma \equiv 0$ implies that $w \equiv 1$, which is analytic. However, for general $\sigma$ that are analytic on $\Omega$, it is not generally true that $\widetilde{\sigma}$ is analytic. In fact, it is easy to see that $\widetilde{\sigma}$ has symmetries somewhat like that of a doubly periodic function, which if analytic would necessarily be constant (the Weierstrass $\wp$-function is doubly periodic but has poles). This is the reason why we start with the case that the weight function is identically $1$ (equivalently $\sigma = 0$) and then prove stability for $C^{2}$ perturbations of $\sigma$ that remain $0$ in a neighborhood of $\partial \Omega$.

For the purposes of this work, we need $\alpha I_{\sigma, E}: L^{2}(\Omega, \mathbb{S}^{n-1}) \to L^{2}(\Gamma_{-}, d\Sigma)$ to be bounded, where $\alpha$ is a smooth cutoff function which is supported only for broken rays that have at most $N_{max} \geq 0$ reflections. 
\begin{lemma}Let $\alpha \in C_{0}^{\infty}(\Gamma_{-})$ and suppose all regular broken rays in $\mathrm{supp}(\alpha)$ have $\leq N_{max} \in \mathbb{N}$ reflections. Then $I_{\sigma, E}:L^{2}(\Omega, \mathbb{S}^{n-1}) \to L^{2}(\Gamma_{-}, d\Sigma)$ is bounded.\end{lemma}
\begin{proof}
First we recall an identity from \cite{inversesource} which asserts that for any function $f \in L^{2}(\Omega, \mathbb{S}^{n-1})$, we have
\begin{equation*}
\int_{\Gamma_{-}}\int_{\mathbb{R}_{+}} f(x+t\theta, \theta)\,dt\,d\Sigma = \int_{\Omega \times \mathbb{S}^{n-1}} f(x,\theta)\,dx\,d\theta.
\end{equation*}
Now observe that
\begin{align*}
\| \alpha I_{\sigma, E} f(x,\theta) \|_{L^{2}(\Gamma_{-})}^{2} & \leq 2 \int_{\Gamma_{-}} |\alpha(x,\theta)|^{2}\sum_{j=0}^{N(x,\theta)}\left| \int_{\mathbb{R}_{+}} [w_{j} f](z_{j}(x,\theta) + t\theta_{j}(x,\theta), \theta_{j}(x,\theta))\,dt \right|^{2} \, d\Sigma\\
&\leq 2 \int_{\Gamma_{-}} \sum_{j=0}^{N_{max}}\left| \int_{\mathbb{R}_{+}} \chi_{[0,\tau_{+}(z_{j},\theta_{j})]}(t)[w_{j} f](z_{j}(x,\theta) + t\theta_{j}(x,\theta), \theta_{j}(x,\theta))\,dt \right|^{2} \, d\Sigma\\
& \leq 2 \int_{\Gamma_{-}} \sum_{j=0}^{N_{max}} \|\chi_{[0,\tau_{+}(z_{j},\theta_{j})]}\|_{L^{2}(\mathbb{R}_{+})}^{2}\|[w_{j}f](z_{j}+t\theta_{j},\theta_{j})\|_{L^{2}(\mathbb{R}_{+})}^{2}\,d\Sigma\\
& \leq 2 \int_{\Gamma_{-}} \sum_{j=0}^{N_{max}} \mathrm{diam}(\Omega)\|f(z_{j}+t\theta_{j},\theta_{j})\|_{L^{2}(\mathbb{R}_{+})}^{2}\,d\Sigma\\
& = 2 \int_{\Gamma_{-}} \sum_{j=0}^{N_{max}} \mathrm{diam}(\Omega)\int_{\mathbb{R}_{+}}|f(z_{j}+t\theta_{j},\theta_{j})|^{2}\,dt\,d\Sigma\\
& = 2 (N_{max}+1)\mathrm{diam}(\Omega)\int_{\Gamma_{-}} \int_{\mathbb{R}_{+}}|f(x+t\theta,\theta)|^{2}\,dt\,d\Sigma\\
& = 2 (N_{max}+1)\mathrm{diam}(\Omega)\|f\|_{L^{2}(\Omega \times \mathbb{S}^{n-1})}^{2}.
\end{align*}
\end{proof}

To formulate a stability estimate, we must first parametrize a family of regular broken rays, having an upper bound on the number of reflections, and whose conormal bundles cover $T^{*}\Omega$. Fix a positive $N_{max} \in \mathbb{N}$. For each regular broken ray $\gamma_{x_{0},\theta_{0}}$ with $N(x_{0},\theta_{0}) \leq N_{max}$, let $(l_{1},l_{2})$ be it's reflection signature on $\widetilde{\Omega}$. If $\gamma_{x_{0},\theta_{0}}$ has no corner points, then find a maximal connected neighborhood $U \subset \Gamma_{-}(E)$ containing $(x_{0},\theta_{0})$ such that all broken rays with initial data in $U$ are regular, have no corner points, and have the same reflection signature $(l_{1},l_{2})$ (See Figure \ref{fig:beam}). In particular, $U$ itself contains no corner points. We then choose a smooth cutoff function $\alpha_{k}$ on $\Gamma_{-}$ that is equal to $1$ on a compact subset of $U$ and vanishes outside of $U$. 

\begin{figure}
\centering
\def \svgwidth{0.6\columnwidth}
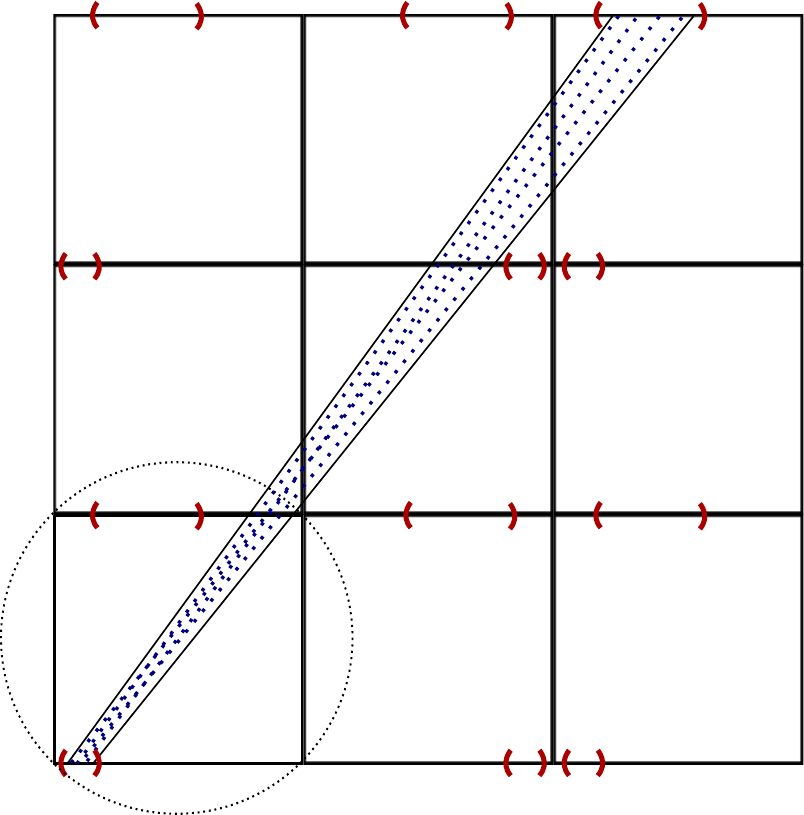
\caption{A typical beam of unfolded broken rays which follow the same reflection sequence before hitting $E$. The beam terminates in the part of $\widetilde{\Omega}$ with reflection signature $(h_{4},v_{4}) = (2,2)$.\label{fig:beam}}
\end{figure}

If $x_{0}$ is a corner point, then we consider all directed lines $\mathcal{L} = \{(x+t\theta,\theta) \, | \, t \in \mathbb{R}\}$ for $(x,\theta)$ in some neighborhood of $(x_{0},\theta_{0})$ such that $N(x,\theta)$ is constant. Then take the intersection of $\mathcal{L}$ with the ball $B = \left( \left( \frac{1}{2}, \frac{1}{2}\right), \frac{\sqrt{2}}{2} \right)$ of radius $\frac{\sqrt{2}}{2}$ whose boundary $\partial B$ circumscribes $\Omega$. We then define a smooth function $\alpha$ on $\partial B \times \mathbb{S}^{1}$ supported in $\partial B \times \mathbb{S}^{1} \cap \mathcal{L}$. We may then extend $\alpha$ to $\rn{2} \times \mathbb{S}^{1}$ to be constant on all directed lines $(x+t\theta, \theta)$. Its restriction to $\Gamma_{-}(\partial \Omega)$ is the cutoff we seek. We will treat the case that $\gamma_{x_{0},\theta_{0}}$ contains an additional corner point in \S \ref{sec:corners}. 

Let $\alpha = \sum_{k=1}^{N_{\alpha}}\alpha_{k}$ be a sum of smooth cutoff functions on $\Gamma_{-}(E)$ in the sense of the previous paragraph, and consider the regularized operator
\begin{equation}
I_{\sigma,E,\alpha} := \sum_{k=1}^{N_{\alpha}}\alpha_{k} I_{\sigma,E}.
\end{equation}
Then
\begin{equation}
N_{\sigma, E, \alpha} := I_{\sigma, E,\alpha}^{*} I_{\sigma, E, \alpha} =  \sum_{k_{1}=1,k_{2}=1}^{N_{\alpha}}I_{\sigma,E}^{*}\alpha_{k_{1}}\alpha_{k_{2}}I_{\sigma, E}.
\end{equation}
The transpose is taken with respect to the standard measure $|\nu(x) \cdot \theta|\,dS(x)\,d\theta$ on $\Gamma_{-}(E)$. Note that we may as well assume that the $\alpha_{k}$ functions have disjoint support, since it won't change the kind of terms present in the normal operator (if two such cutoff functions $\alpha_{k_{1}}$ and $\alpha_{k_{2}}$ have overlapping support, then their product is also smooth with the supported broken rays localized just as in the previous construction).

We also define the \textit{microlocally visible set} by
\begin{equation}
\mathcal{M}' := \{ (z,\xi) \in T^{*}\Omega \, | \, (z,\xi) \in N^{*}\gamma_{x,\theta} \textrm{ for some }(x,\theta) \textrm{ with }\alpha(x,\theta) > 0\}.
\end{equation}

\begin{remark*}One reason we need to impose an upper bound on the number of reflections is that as $N$ increases, the open level sets of $N(x,\theta)$ shrink in measure to zero. Similarly, the neighborhoods in $\Gamma_{-}$ on which we have constant reflection signature $(l_{1},l_{2})$ become infinitesimal as $|l_{1}|+|l_{2}| \to \infty$. Thus $\alpha$ will not be smooth unless we apply some threshold.\end{remark*}

We have the following stability result analogous to that in \cite{xraygeneric}.
\begin{theorem}
\begin{enumerate}
\item[(a)] Fix $\sigma$ that is $0$ in a neighborhood of $\partial \Omega$, and choose $\alpha \in C^{\infty}(\Gamma_{-})$ as above with reflection threshold $N_{max}$. Fix a set $K \Subset \mathcal{M}$ compactly contained in the visible set $\mathcal{M}$. If $I_{\sigma, E, \alpha}$ is injective on $L^{2}(K)$, then
\begin{equation}
\frac{1}{C}\|f\|_{L^{2}(K)} \leq \|N_{\sigma, E, \alpha}f \|_{H^{1}(\Omega)} \leq C\|f\|_{L^{2}(K)}. \label{eq:stabilityestimate}
\end{equation}
\item[(b)] Let $\alpha^{0}$ be as above related to some fixed $\sigma_{0}$. Assume that $I_{\sigma_{0}, E, \alpha^{0}}$ is injective on $L^{2}(K)$. Then estimate (\ref{eq:stabilityestimate}) remains true for $(\sigma, \alpha)$ in a small $C^{2}$ neighborhood of $(\sigma_{0}, \alpha^{0})$ such that $\sigma \equiv 0$ near $\partial \Omega$, and with a uniform constant $C > 0$.
\end{enumerate}\label{thm:stabilityestimate}
\end{theorem}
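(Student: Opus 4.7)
The plan is to build the proof on the pseudodifferential structure of $N_{\sigma,E,\alpha}$ that will be established in \S \ref{sec:normaloperatorcompute}, \S \ref{sec:normalsimplify} and \S \ref{sec:corners}. There, path unfolding turns each summand $I_{\sigma,E}^{*}\alpha_{k_{1}}\alpha_{k_{2}}I_{\sigma,E}$ into (a smooth cutoff composed with) a weighted X-ray normal operator on $\widetilde{\Omega}$, which by the Frigyik--Stefanov--Uhlmann analysis is a classical pseudodifferential operator of order $-1$ plus a smoothing operator with $C_{0}^{\infty}$ Schwartz kernel. Its principal symbol at $(x,\xi)$ is a positive weighted sum over the regular broken rays through $x$ with conormal direction $\xi$ that are carried by some $\alpha_{k}$. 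For every $(x,\xi) \in T^{*}K \setminus 0$, the hypothesis $K \Subset \mathcal{M}$ yields a regular broken ray $\gamma$ through $x$ with $N(\gamma) \leq N_{max}$ and $\xi \in N^{*}\gamma$; by construction of $\alpha$ such a ray is carried by one of the cutoffs, so $(x,\xi) \in \mathcal{M}'$. Hence the principal symbol of $N_{\sigma,E,\alpha}$ is elliptic on a neighborhood of $T^{*}K \setminus 0$.

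Given ellipticity, I would invoke the standard parametrix construction to produce a properly supported pseudodifferential operator $Q$ of order $+1$ with $Q N_{\sigma,E,\alpha} = I + R$ on a neighborhood of $K$, where $R$ is smoothing. The order $-1$ mapping property gives the upper bound $\|N_{\sigma,E,\alpha}f\|_{H^{1}(\Omega)} \leq C \|f\|_{L^{2}(K)}$ immediately, while the parametrix identity yields
\begin{equation}
\|f\|_{L^{2}(K)} \leq \|Q N_{\sigma,E,\alpha} f\|_{L^{2}(\Omega)} + \|R f\|_{L^{2}(K)} \leq C \|N_{\sigma,E,\alpha} f\|_{H^{1}(\Omega)} + \|R f\|_{L^{2}(K)}.
\end{equation}
Since $R$ is smoothing, $R: L^{2}(K) \to L^{2}(K)$ is compact, and the $\|Rf\|_{L^{2}}$ term can be absorbed by the usual functional analytic argument: if (\ref{eq:stabilityestimate}) failed there would exist $f_{n} \in L^{2}(K)$ with $\|f_{n}\|_{L^{2}(K)}=1$ and $\|N_{\sigma,E,\alpha}f_{n}\|_{H^{1}} \to 0$; passing to a weakly convergent subsequence and using compactness of $R$ together with the parametrix estimate shows $f_{n}$ is strongly convergent in $L^{2}(K)$ to some $f$ with $\|f\|=1$ and $N_{\sigma,E,\alpha} f = 0$, which the assumed injectivity of $I_{\sigma,E,\alpha}$ forces to vanish, a contradiction.

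For part (b), the key observation is that the Schwartz kernels entering the construction of $Q$ and $R$ depend continuously on $(\sigma,\alpha)$ in $C^{2}$, because the weights $w_{j}$ and the cutoffs $\alpha_{k}$ enter the amplitudes of the normal operator through explicit smooth expressions. For $(\sigma,\alpha)$ in a small $C^{2}$ neighborhood of $(\sigma_{0},\alpha^{0})$ one therefore obtains parametrices $Q_{\sigma,\alpha}$ and remainders $R_{\sigma,\alpha}$ whose relevant seminorms are bounded uniformly, and the parametrix estimate above holds with a uniform constant. To upgrade the injectivity at $(\sigma_{0},\alpha^{0})$ to a uniform lower bound in a neighborhood, I would run a second contradiction argument: from sequences $(\sigma_{m},\alpha_{m}) \to (\sigma_{0},\alpha^{0})$ in $C^{2}$ and $f_{m}$ with $\|f_{m}\|_{L^{2}(K)}=1$ and $\|N_{\sigma_{m},E,\alpha_{m}} f_{m}\|_{H^{1}} \to 0$, the uniform smoothing bound on $R_{\sigma_{m},\alpha_{m}}$ combined with continuous $C^{2}$ dependence of the kernel of $N$ on $(\sigma,\alpha)$ produces a strong $L^{2}$ limit $f$ satisfying $N_{\sigma_{0},E,\alpha^{0}} f = 0$, and injectivity at $(\sigma_{0},\alpha^{0})$ forces $f = 0$, the desired contradiction. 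The main technical obstacle I anticipate is verifying the uniformity of the smoothing bounds on $R_{\sigma,\alpha}$, which requires tracking how the microlocal decomposition of the Schwartz kernel in \S \ref{sec:normaloperatorcompute}--\S \ref{sec:corners} (including the delicate treatment of corner reflections) depends on $(\sigma,\alpha)$ in $C^{2}$; this is where I expect most of the work in \S \ref{sec:reducingsmoothness} to lie.
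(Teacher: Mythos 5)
Your part (a) is essentially the paper's argument: the decomposition $N_{\sigma,E,\alpha}=N_{\sigma,E,\alpha,ballistic}+N_{\sigma,E,\alpha,reflect}$ with the ballistic part an elliptic $\Psi$DO of order $-1$ on $\mathcal{M}$ and the reflected part having $C_{0}^{\infty}$ Schwartz kernel (Proposition \ref{prop:normaldecomposition}), a parametrix $Q$ giving $QN_{\sigma,E,\alpha}f=f+(S_{1}+QN_{\sigma,E,\alpha,reflect})f$ with compact remainder, and then absorption of the compact term using injectivity; the paper cites Lemma 2 of \cite{stefanov1} for exactly the functional-analytic step you spell out by contradiction, and notes as you do that injectivity of $I_{\sigma,E,\alpha}$ passes to $N_{\sigma,E,\alpha}$ by integration by parts. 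Where you genuinely diverge is part (b). The paper proves a quantitative Lipschitz bound (Proposition \ref{prop:normalperturb}), namely $\|(N_{\sigma',E,\alpha'}-N_{\sigma,E,\alpha})f\|_{H^{1}(\Omega)}\leq C\delta\|f\|_{L^{2}(K)}$ for $(\sigma',\alpha')$ that are $O(\delta)$-close in $C^{2}$ (this is why $C^{2}$ regularity suffices: two derivatives of the kernel control the $H^{1}$ operator norm), and then concludes by a one-line perturbation of the part (a) estimate: $\|f\|\leq C\|N_{\sigma,E,\alpha}f\|_{H^{1}}+C^{2}\delta\|f\|$, absorbing the last term for small $\delta$. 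You instead propose a compactness/contradiction argument over sequences $(\sigma_{m},\alpha_{m})\to(\sigma_{0},\alpha^{0})$, which requires uniform control of the parametrices $Q_{\sigma,\alpha}$ and remainders $R_{\sigma,\alpha}$ across the neighborhood — a burden you correctly flag, and one the paper's route avoids entirely since it perturbs only $N$ itself and never re-constructs the parametrix for the perturbed operator. Your approach can be made to work (symbol seminorms and the ellipticity lower bound do depend continuously on $(\sigma,\alpha)$), but it is less direct and yields no explicit smallness threshold, whereas the paper's argument makes the admissible $\delta$ quantitative and isolates all the $(\sigma,\alpha)$-dependence in a single operator-norm estimate on the kernels \eqref{eq:normalreflectedterm} and \eqref{eq:normalreflectbigkahuna}.
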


\section{Injectivity of $I_{0, E}$ for the Square\label{sec:squareinjective}}
For the 2-dimensional square, we will be able to establish injectivity of the broken ray transform for the unknown function $f$ supported on some subset of the domain that depends on $E$. We will be able to detect covectors from $N^{*}\gamma_{x,\theta}$ in $\mathrm{WF}_{A}(f)$ for (most) $(x,\theta)\in \Gamma_{-}(E)$. The idea for the proof of the theorem is heavily borrowed from the microanalytic techniques utilized in \cite{xraygeneric}. We will need to assume that $f$ is compactly supported within $\Omega$, which in particular will allow us to treat the reflected ray as a smooth regular curve in the sense of \cite{xraygeneric} since the integrand vanishes near the points of reflection. By using the path unfolding approach, we can relatively easily obtain the following microlocal result.

\begin{proposition}Let $f \in \mathcal{E}'(\Omega)$. Suppose $I_{0,E}f(x,\theta) = 0$ for all $(x,\theta)$ in a small neighborhood of $(x_{0},\theta_{0}) \in \Gamma_{-}(E)$ where $\gamma_{x_{0},\theta_{0}}$ is a regular broken ray. Then $\mathrm{WF}_{A}(f) \cap N^{*}\gamma_{x_{0},\theta_{0}} = \emptyset$.\label{thm:microlocalsquare}\end{proposition}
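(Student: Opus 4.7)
The strategy is to use path unfolding to reduce the claim to the analytic microlocal invertibility of the ordinary Euclidean X-ray transform along straight lines, which falls under the framework of \cite{xraygeneric}. The key point is that regularity of $\gamma_{x_0,\theta_0}$ gives us enough room to unfold smoothly and analytically, so the broken-ray problem becomes a standard X-ray problem on the reflected domain $\widetilde{\Omega}$.

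First I would use the regularity of $\gamma_{x_0,\theta_0}$ to produce an open neighborhood $U\subset\Gamma_-(E)$ of $(x_0,\theta_0)$ on which $N(x,\theta)$ and the reflection signature $(l_1,l_2)$ are constant and every broken ray is regular. On such a $U$, the unfolding maps $(x,\theta)\mapsto(z_j(x,\theta),\theta_j(x,\theta))$ are real analytic (no corner of $\Omega$ or point of $\partial E$ is encountered), and the associated unfolded segments $\widetilde{\gamma}_{x,\theta}\subset\widetilde{\Omega}$ form a real-analytic 2-parameter family of straight lines near $\widetilde{\gamma}_{x_0,\theta_0}$, since $(x,\theta)$ varies in the 2-dimensional manifold $\Gamma_-(E)$ and the resulting map into the space of lines through $\widetilde{\Omega}$ has invertible differential at $(x_0,\theta_0)$.

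Next I would extend $f$ to $\widetilde{f}$ on $\widetilde{\Omega}$ by iterated reflection across $\partial\Omega$. Because $\mathrm{supp}(f)\Subset\Omega$, the extension $\widetilde{f}$ vanishes in a neighborhood of every reflection axis of the tiling, so it is a well-defined distribution on $\widetilde{\Omega}$ which, inside each cell $C_{l'_1,l'_2}=[l'_1,l'_1+1]\times[l'_2,l'_2+1]$, is the pullback of $f$ by the affine map $R_{l'_1,l'_2}^{-1}$. The broken-ray identity
\begin{equation}
I_{0,E}f(x,\theta)=\int_0^{L(\gamma_{x,\theta})}\widetilde{f}(x+t\theta)\,dt
\end{equation}
stated earlier in the paper then turns the hypothesis $I_{0,E}f\equiv 0$ on $U$ into the statement that the ordinary Euclidean X-ray transform of $\widetilde{f}$, with unit weight, vanishes on the open family of lines $\{\widetilde{\gamma}_{x,\theta}\}_{(x,\theta)\in U}$. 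Invoking the microlocal analytic result of \cite{xraygeneric} for this family, which is generic and satisfies the required spanning condition on conormals near $\widetilde{\gamma}_{x_0,\theta_0}$, yields $\mathrm{WF}_A(\widetilde{f})\cap N^*\widetilde{\gamma}_{x_0,\theta_0}=\emptyset$. Restricting to each cell $C_{l'_1,l'_2}$ crossed by the unfolded ray and using that affine reflections preserve the analytic wavefront set, I pull this back leg-by-leg to conclude $\mathrm{WF}_A(f)\cap N^*\gamma_{x_0,\theta_0,j}=\emptyset$ for each $j$, hence $\mathrm{WF}_A(f)\cap N^*\gamma_{x_0,\theta_0}=\emptyset$.

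The main obstacle is the analytic bookkeeping needed to justify the reduction: one must verify that the unfolded family of lines really is parametrized real-analytically and covers a neighborhood in line-space (which is why the corner-free, $\partial E$-avoiding regularity of $\gamma_{x_0,\theta_0}$ is essential), and that the reflected extension $\widetilde{f}$ introduces no artificial analytic singularities along the reflection axes. The hypothesis $K\Subset\Omega$, guaranteeing that $f$ (and thus $\widetilde{f}$) vanishes near the cell boundaries, is precisely what makes the latter issue trivial; without it the extension might only be continuous and extra wavefront could appear along $\partial\Omega$ and its reflected copies, breaking the reduction to a clean straight-line X-ray problem.
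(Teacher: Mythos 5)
Your proposal follows essentially the same path-unfolding argument as the paper's proof: unfold the broken ray into a straight line in the reflected tiling, extend $f$ by reflection (harmless since $\mathrm{supp}(f)\Subset\Omega$), apply the analytic microlocal result of \cite{xraygeneric} to the resulting straight-line X-ray data, and pull the conclusion back through the reflections. The one point you gloss over, which the paper dispatches in a single sentence, is that a regular broken ray may still pass through a corner of the square, in which case $N(x,\theta)$ and the reflection signature are not locally constant near $(x_0,\theta_0)$ and one must add extra reflected copies of $\Omega$ to accommodate the nearby rays.
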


\begin{proof}We need to construct coordinates near the broken ray $\gamma_{0}$. To do this, we let $\{x_{0,1},\ldots, x_{0,N}\}$ be the ordered reflection points on $\partial \Omega$ for $\gamma_{0}$, where $N = N(x_{0}, \theta_{0})$. We then construct a new domain $\widetilde{\Omega}$ consisting of $N$ reflected copies of $\Omega$ which are glued together with the original $\Omega$ along the edges that contain each $x_{0,j}$, $1 \leq j \leq N$. Specifically, let $P_{m}= \{x \in \rn{2} \, | \, (x-x_{0,j}) \cdot \xi_{j} = 0\}$ for $1 \leq j \leq N$ be the hyperplanes in $\rn{2}$ corresponding to the reflection points of $\gamma_{0}$. We iteratively construct $\widetilde{\Omega}$ by starting with $x_{0,1}$ and reflecting $\Omega$ across the hyperplane $P_{1}$. Then glue the two copies of $\Omega$ together along $P_{1}$. Also reflect each hyperplane $P_{j}$ across $P_{1}$ for $2 \leq j \leq N$. Now we consider the reflected hyperplane $P_{2}$, which we still denote with the same notation, and reflect the previously reflected copy of $\Omega$ across it. Continuing in this way, we obtain a space consisting of $N+1$ copies of $\Omega$, each reflected along a specific edge. If $\gamma_{0}$ contains a corner point, we can add additional reflected copies of $\Omega$ as necessary to accommodate nearby broken rays. We also obtain a distribution $\widetilde{f} \in \mathcal{E}'(\widetilde{\Omega})$ and an unfolded version of $\gamma_{0}$, denoted by $\widetilde{\gamma_{0}}$, using the same process. Note that by reversing the reflection sequence, any covector $(\widetilde{z}, \widetilde{\xi}) \in T^{*}\widetilde{\Omega}$ corresponds to a unique covector $(z,\xi) \in T^{*}\Omega$.

Since the edges of $\Omega$ are flat, it follows that $\widetilde{\gamma_{0}}$ is a straight line. The openness of $E$ and the assumptions on $(x_{0},\theta_{0})$ ensure that for $(x,\theta)$ in a neighborhood $V \subset \Gamma_{-}(E)$ of $(x_{0}, \theta_{0})$, the unfolded paths $\widetilde{\gamma}_{x,\theta}$ lie in the same extended space $\widetilde{\Omega}$ and avoid $\partial E$. Now choose a point $p_{0}$ on the line $x_{0} + t\theta_{0}$ for $t < 0$ so that $p_{0} \notin \widetilde{\Omega}$. Define $x = p_{0} + t\theta$. Then $(\theta, t)$ are local coordinates near any point of $\widetilde{\Omega} \cap \widetilde{\gamma}_{x_{0},\theta_{0}}$. We can assume without loss of generality that $\theta_{0} = (0,1)$. Writing $x = (x^{1},x^{2}) = (\theta^{1}, t)$, we have the coordinates
\begin{equation*}
U = \{x = (\theta^{1}, t) \, | \, |\theta^{1}| < \epsilon, \, l^{-} < t < l^{+} \} \subset \widetilde{\Omega}.
\end{equation*}
Since $f$ is compactly supported inside of $\Omega$, we may shrink $\epsilon$ as necessary so that we can take $l^{-}, l^{+}$ to be constant. Now as in the proof of Proposition 1 in \cite{xraygeneric}, we let $(z_{0}, \xi_{0} )\in N^{*}\gamma_{0}$. Then consider the corresponding reflected covector via path unfolding given by $(\widetilde{z}^{0}, \widetilde{\xi}_{0}) \in N^{*}\widetilde{\gamma}_{0}$. That is, let $\{\xi_{1},\xi_{2},\ldots, \xi_{m}\}$ the sequence of normal vectors corresponding to the ordered reflection points of the path $\gamma_{0}$ up until reaching the point $z_{0}$. Let $R_{j}:\rn{2} \to \rn{2}$ be the reflection operators across each affine hyperplane $z \cdot \xi_{j} = x_{0,j} \cdot \xi_{j}$ for $1 \leq j \leq m$. Specifically, $R_{j}v = v- 2((v-x_{0,j})\cdot \xi_{j})\xi_{j}$. We then define $\widetilde{z}_{0} = Rz_{0} := R_{1}R_{2} \cdots R_{m}z_{0}$. Also, consider the operators $A_{j}:\rn{2} \to \rn{2}$ defined as reflections across the hyperplanes $z \cdot \xi_{j} = 0$ that pass through the origin. Specifically, $A_{j}v = v - 2(v \cdot \xi_{j})\xi_{j}$. We then define $\widetilde{\xi}_{0} = A\xi_{0} := A_{1}A_{2} \cdots A_{m_{z_{0}}}\xi_{0}$. The argument of Proposition 1 in \cite{xraygeneric} shows that $(\widetilde{z}_{0}, \widetilde{\xi}_{0}) \notin \mathrm{WF}_{A}(\widetilde{f})$. 

Now we undo the reflection process to conclude that $(z_{0}, \xi_{0}) \notin \mathrm{WF}_{A}(f)$. In particular, we use the respective inverse transformations $A^{-1}$ and $R^{-1}$, generated by $A_{j}^{-1} = A_{j}$ and $R_{j}^{-1} = R_{j}$ to recover $(z_{0},\xi_{0})$ from $(\widetilde{z}_{0}, \widetilde{\xi}_{0})$. Note that the reflections $A_{j}, R_{j}$ preserve the singularities of $f$, and so the analytic wavefront set of $\widetilde{f}$ transforms in the obvious way. Thus $(\widetilde{z}_{0}, \widetilde{\xi}_{0}) = (Rz_{0}, A\xi_{0}) \notin \mathrm{WF}_{A}(\widetilde{f}) \Longrightarrow (z_{0}, \xi_{0}) \notin \mathrm{WF}_{A}(f)$.\end{proof}

\begin{proof}[Proof of Theorem \ref{thm:injectivesquare}] Suppose $I_{\sigma, E}f = 0$. By Proposition \ref{thm:microlocalsquare} we have that $f$ is analytic on $K$. Since $\mathrm{supp}(f) \subset K$, we have that $f$ can be extended to an entire function and hence must be identically zero.\end{proof}

\begin{remark*}In the case of the square Theorem \ref{thm:injectivesquare} gives injectivity on the whole domain if $E$ contains two adjacent (closed) edges. However, one must be careful since it fails for $E$ consisting of two opposite edges. \end{remark*}

\section{Stability \label{sec:stability}}
In order to prove the stability result for $I_{\sigma,E}$ with $\sigma$ a $C^{2}$ perturbation of $0$ that vanishes near $\partial \Omega$, we will first compute its normal operator. Suppose there is an upper limit $N_{max}$ on the number of reflections we want to allow in the data, which is implicit in the choice of cutoff functions $\alpha_{k}(x,\theta)$, $1 \leq k \leq N_{\alpha}$. We construct the modified operator
\begin{equation}
I_{\sigma,E,\alpha}f(x,\theta) := \sum_{k=1}^{N_{\alpha}}\alpha_{k}(x,\theta)I_{\sigma,E}f(x,\theta), \quad (x,\theta) \in \Gamma_{-}(E).
\end{equation}
We will always suppose that $K \Subset \mathcal{M}$ is fixed, where $\mathcal{M} \subset \Omega$ is the visible set. We will prove the following proposition regarding the structure of $N_{\sigma,E,\alpha}$:
\begin{proposition}$N_{\sigma,E,\alpha} = N_{\sigma,E,\alpha,ballistic} + N_{\sigma,E,\alpha,reflect}$ where $N_{\sigma,E,\alpha,ballistic}$ is a classical pseudo differential operator of order $-1$, elliptic on $\mathcal{M}$, and $N_{\sigma, E,\alpha,reflect}$ is an operator with $C_{0}^{\infty}(\Omega \times \Omega)$ Schwartz kernel. Thus there exists a classical pseudo differential operator $Q$ in $\Omega$ of order $1$ such that
\begin{equation}
QN_{\sigma, E,\alpha}f = f + QN_{\sigma,E,\alpha,reflect}f + S_{1}
\end{equation}
for any $f \in \mathcal{D}'(K)$, where $S_{1}$ is an operator with $C_{0}^{\infty}(\Omega \times \Omega)$ Schwartz kernel. \label{prop:normaldecomposition}\end{proposition}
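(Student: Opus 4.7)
The plan is to decompose $N_{\sigma,E,\alpha}$ according to which beam cutoff $\alpha_{k}$ is involved and which segments of each broken ray contain the two interaction points. By the disjoint-support assumption on the $\alpha_{k}$'s, only the diagonal cross-terms $k_{1}=k_{2}=k$ survive, and since $N(x,\theta)\equiv N_{k}$ is constant on $\mathrm{supp}\,\alpha_{k}$, we have
\begin{equation*}
N_{\sigma,E,\alpha} \;=\; \sum_{k}(\alpha_{k}I_{\sigma,E})^{*}(\alpha_{k}I_{\sigma,E}) \;=\; \sum_{k}\sum_{j_{1},j_{2}=0}^{N_{k}}I_{j_{1}}^{*}\alpha_{k}^{2}I_{j_{2}},
\end{equation*}
where $I_{j}f(x,\theta)=\int_{0}^{\infty}w_{j}\,f(z_{j}(x,\theta)+t\theta_{j}(x,\theta))\,dt$ is the contribution from the $j$-th segment. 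I define $N_{\sigma,E,\alpha,ballistic}$ to be the sum over $j_{1}=j_{2}$ and $N_{\sigma,E,\alpha,reflect}$ the sum over $j_{1}\neq j_{2}$, and treat each piece separately.

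For the ballistic part I argue segment-by-segment. Under path unfolding, the $j$-th segments of the broken rays indexed by $(x,\theta)\in\mathrm{supp}\,\alpha_{k}$ form a smooth 2-parameter beam of straight lines in $\widetilde{\Omega}$ with fixed reflection signature $(l_{1},l_{2})$. In these unfolded coordinates, $I_{j}^{*}\alpha_{k}^{2}I_{j}$ is precisely the normal operator of a weighted, restricted Euclidean X-ray transform. By the microlocal analysis of restricted X-ray transforms on smooth line families (as in \cite{xraygeneric}), each such piece is a classical pseudodifferential operator of order $-1$ whose principal symbol at $(z,\xi)$ is positive whenever the beam contains a line through $z$ conormal to $\xi$. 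Summing over $(k,j)$ and invoking the definition of $\mathcal{M}$, which guarantees that every covector in $T^{*}\mathcal{M}\setminus 0$ is conormal to at least one included segment, gives ellipticity of $N_{\sigma,E,\alpha,ballistic}$ on $\mathcal{M}$.

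For the reflect part I would write the Schwartz kernel of each off-diagonal piece $I_{j_{1}}^{*}\alpha_{k}^{2}I_{j_{2}}$ as the push-forward of the smooth density $\alpha_{k}^{2}\,w_{j_{1}}w_{j_{2}}\,|\nu(x)\cdot\theta|\,dS(x)\,d\theta\,dt_{1}\,dt_{2}$ under the map
\begin{equation*}
\Phi_{j_{1},j_{2}}:(x,\theta,t_{1},t_{2}) \longmapsto \bigl(z_{j_{1}}(x,\theta)+t_{1}\theta_{j_{1}},\; z_{j_{2}}(x,\theta)+t_{2}\theta_{j_{2}}\bigr)\in\Omega\times\Omega.
\end{equation*}
Because the $\alpha_{k}$'s are constructed to avoid corner points and $\partial E$, the directions $\theta_{j_{1}}$ and $\theta_{j_{2}}$ are not parallel on $\mathrm{supp}\,\alpha_{k}$, and a direct calculation of the Jacobian determinant of $\Phi_{j_{1},j_{2}}$ yields a factor proportional to $\sin(\angle(\theta_{j_{1}},\theta_{j_{2}}))$, hence nonvanishing. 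After refining the $\alpha_{k}$ partition if necessary so that $\Phi_{j_{1},j_{2}}$ is a diffeomorphism onto its image, the push-forward of a smooth compactly supported density is a $C_{0}^{\infty}(\Omega\times\Omega)$ Schwartz kernel. The bound $N(x,\theta)\leq N_{max}$ makes the $(k,j_{1},j_{2})$ sum finite, and $K\Subset\mathcal{M}\Subset\Omega$ together with $\sigma\equiv 0$ near $\partial\Omega$ keeps every contribution compactly supported away from $\partial\Omega\times\partial\Omega$.

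Finally, ellipticity of $N_{\sigma,E,\alpha,ballistic}$ on a neighborhood of $K$ allows the standard parametrix construction to produce a classical pseudodifferential operator $Q$ of order $1$ with $QN_{\sigma,E,\alpha,ballistic}=I+S_{1}$ on $\mathcal{D}'(K)$, where $S_{1}$ has $C_{0}^{\infty}(\Omega\times\Omega)$ Schwartz kernel; then $QN_{\sigma,E,\alpha}f = f + QN_{\sigma,E,\alpha,reflect}f + S_{1}f$, as claimed. The main technical obstacle I anticipate is the nondegeneracy and injectivity analysis of $\Phi_{j_{1},j_{2}}$ in the off-diagonal step, in particular handling reflection signatures that bring the two segments nearly parallel, together with the separate treatment of broken rays containing corner points via the circumscribing ball construction deferred to \S \ref{sec:corners}.
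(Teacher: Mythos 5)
Your decomposition into the diagonal terms $j_{1}=j_{2}$ (ballistic) and off-diagonal terms $j_{1}\neq j_{2}$ (reflect), and your treatment of the ballistic part via unfolding each beam of $j$-th segments into a smooth family of straight lines and invoking the order $-1$ pseudodifferential result of \cite{xraygeneric}, match the paper's argument. The gap is in the reflect part. Your nondegeneracy claim --- that $\theta_{j_{1}}$ and $\theta_{j_{2}}$ are never parallel on $\mathrm{supp}\,\alpha_{k}$ and that the Jacobian of $\Phi_{j_{1},j_{2}}$ carries a factor $\sin(\angle(\theta_{j_{1}},\theta_{j_{2}}))$ --- is false for the square. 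After two reflections off parallel walls (relative reflection signature $(2,0)$ or $(0,2)$) the direction of the segment is exactly restored, and after signature $(1,1)$ it is exactly reversed; by the formula $S_{l_{1},l_{2}}(\theta)=(-1)^{l_{1}+l_{2}}\theta+\frac{\pi}{2}[1-(-1)^{l_{1}}]$ the two segment directions are parallel whenever $l_{1}$ and $l_{2}$ have the same parity. So your proposed Jacobian factor would vanish identically on many of the pairs $(j_{1},j_{2})$ you need, and the criterion you use to rule out degeneracy does not identify the correct mechanism.

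The correct mechanism, and the one the paper uses, is path unfolding of the $j_{2}$-th segment relative to the $j_{1}$-th: fixing the output point $x$ and changing variables from $(\theta,t)$ to $y=z(x,\eta)+t\widehat{\eta}$, the Jacobian determinant equals $|\widetilde{y}-x|$, where $\widetilde{y}=R_{l_{1},l_{2}}(y)$ lives in a \emph{different} copy of the square because $j_{2}\neq j_{1}$ forces $(l_{1},l_{2})\neq(0,0)$. Hence $|\widetilde{y}-x|\geq d(\mathrm{supp}(f),\partial\Omega)>0$ uniformly, and the kernel is $\frac{1}{|\widetilde{y}-x|}$ times smooth factors, which is $C_{0}^{\infty}(\Omega\times\Omega)$ --- no angle condition is needed, and no refinement of the partition is required. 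Note that what makes the off-diagonal kernel smooth is not transversality of the two segment directions but the spatial separation of the two points in the unfolded domain; in particular the degenerate-looking parallel configurations are perfectly harmless. Your deferral of the corner-point case is also doing real work: the paper spends \S \ref{sec:corners} replacing $T^{-j}$ and $\tau_{-}$ by the smooth surrogates $q_{j}$ built from the circumscribing ball and splitting off the non-contributing ``bow-tie'' segments, and without that the integrands in your pushforward are not smooth where broken rays pass near a corner.
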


\subsection{Computing the Normal Operator\label{sec:normaloperatorcompute}}
Recall that given $j \in \mathbb{Z}$ we have defined the variables $z_{j},\theta_{j}$ depending on $x \in \Omega$, $\theta \in \mathbb{S}^{1}$ by
\begin{equation*}
(z_{j}, \theta_{j}) := T^{j}(x+\tau_{-}(x,\theta)\theta,\, \theta)
\end{equation*}
Notice that $(z_{0},\theta_{0}) = (x + \tau_{-}(x,\theta)\theta, \theta)$. For now, we will only assume that the domain $\Omega$ is an open subset of $\rn{n}$ with piecewise smooth, convex boundary. In \S \ref{sec:normalsimplify} and beyond we then work specifically with the square $(0,1) \times (0,1)$.

Choose a cutoff function $\alpha(x,\theta)$ on $\Gamma_{-}$ such that $\alpha I_{\sigma, E}$ has a constant number of reflections $N \geq 0$ on its support. Later we will consider broken rays with corner points, but then the number of reflections is not constant, so it must be treated slightly differently. The adjoint to $\alpha I_{\sigma,E}$ can be computed as follows:
\begin{align}
& \quad \int_{\Gamma_{-}}\alpha(x,\theta) [I_{\sigma,E}f](x,\theta) g(x,\theta)\,d\Sigma \label{eq:adjointcomputation}\\
& = \int_{\Gamma_{-}}\alpha(x,\theta) g(x,\theta) \sum_{j=0}^{N}\int_{\rn{+}} w_{j}(z_{j}+t\theta_{j},\theta_{j})f(z_{j} + t \theta_{j})|\nu(x) \cdot \theta|\,dt\,dS(x)\,d\theta \notag
\end{align}

For each $j$ we make the change of variables $(x,\theta,t) \mapsto (y,\eta)$ where $(y,\eta) := (z_{j}(x,\theta)+t\theta_{j}(x,\theta), \theta_{j}(x,\theta))$. Thus
\begin{align}
(x,\theta) & = (z_{-j}(y,\eta), \theta_{-j}(y,\eta))\notag\\
t & = -\tau_{-}(y,\eta).
\end{align}
Intuitively, in moving from $(y,\eta)$ to $\left( z_{-j}(y,\eta), \theta_{-j}(y,\eta) \right)$, we first project from the point $y$ to the boundary along the direction $-\eta$ and then trace backwards along the trajectory for the previous $j$ reflections. We compute
\begin{align}
& \Big(z_{m}(x,\theta) + s\theta_{m}(x,\theta), \theta_{m}(x,\theta)\Big)\notag\\
& = \left(z_{m}\Big(z_{-j}(y,\eta), \theta_{-j}(y,\eta)\Big) + s\theta_{m}\Big(z_{-j}(y,\eta), \theta_{-j}(y,\eta)\Big), \theta_{m}\Big(z_{-j}(y,\eta), \theta_{-j}(y,\eta)\Big)\right)\notag\\
& = (z_{m-j}(y,\eta)+s\theta_{m-j}(y,\eta), \, \theta_{m-j}(y,\eta)).\notag
\end{align}

Given a fixed number of reflections $j$, it is clear that each point $(y, \eta) \in \Omega \times \mathbb{S}^{n-1}$ corresponds to a unique point $(x,\theta) \in \Gamma_{-}$. Thus, the previously described change of variables is a diffeomorphism. The resulting integral is over the domain $\Omega \times \mathbb{S}^{n-1}$.
We have
\begin{align*}
& \sum_{j=0}^{N}\int_{\Omega \times \mathbb{S}^{n-1}}\alpha(z_{-j}(y,\eta), \theta_{-j}(y,\eta))g(z_{-j}(y,\eta), \theta_{-j}(y,\eta))w_{j}(y,\eta)f(y) J_{j}(y,\eta)\,dy\,d\eta,
\end{align*}
where
\begin{equation*}
J_{j}(y,\eta) = \left| \frac{\partial x \partial \theta \partial t}{\partial y \, \partial \eta}\right| |\nu(z_{-j}(y,\eta))\cdot \theta_{-j}(y,\eta)|.
\end{equation*}
Since the billiard map $T$ preserves the symplectic form $|\nu(x) \cdot \theta| dx \wedge d\theta$ on $\Gamma_{-}$, it follows that $J_{j}(y,\eta) = 1$. We stress that this is even true for domains with piecewise smooth boundary as is the case for the square, by measure theoretic arguments (see \cite{serge2} \S 1.7). One can verify that the Jacobian factors above cancel by first making the change of variables $(x,\theta)\to T^{j}(x,\theta)$ in (\ref{eq:adjointcomputation}), using the invariance of the symplectic form $|\nu(x)\cdot \theta| dx\wedge d\theta$ under $T$. Then change to $(y,\eta)$ coordinates on $\Omega \times \mathbb{S}^{n-1}$ using the same approach as in (Theorem 1 (b), \cite{inversesource}). Thus we have
\begin{align}
(\alpha I_{\sigma, E})^{*}g(x) & = \sum_{j=0}^{N}\int_{\mathbb{S}^{n-1}}[\alpha g](z_{-j}(x,\theta), \theta_{-j}(x,\theta)) w_{j}(x,\theta) \,d\theta
\end{align}

We are now ready to compute the normal operator $I_{\sigma,E}^{*}I_{\sigma,E}$. By definition
\begin{align}
& I_{\sigma,E,\alpha}^{*}I_{\sigma,E,\alpha}f(x) \notag\\
& = \sum_{j_{1}=0}^{N}\int_{\mathbb{S}^{n-1}}|\alpha(z_{-j_{1}}(x,\theta), \theta_{-j_{1}}(x,\theta))|^{2}I_{\sigma,E}f(z_{-j_{1}}(x,\theta), \theta_{-j_{1}}(x,\theta))w_{j_{1}}(x,\theta)\, d\theta\notag\\
& = \sum_{j_{1}=0}^{N}\sum_{j_{2}=0}^{N} \int_{\mathbb{S}^{n-1}}\int_{\mathbb{R}_{+}}|\alpha(z_{-j_{1}}(x,\theta), \theta_{-j_{1}}(x,\theta))|^{2} \, w_{j_{1}}(x,\theta)\notag\\
& \quad \cdot w_{j_{2}}(z_{j_{2}-j_{1}}(x,\theta)+t\theta_{j_{2}-j_{1}}(x,\theta), \theta_{j_{2}-j_{1}}(x,\theta)) \, f\left(z_{j_{2}-j_{1}}(x,\theta) + t\theta_{j_{2}-j_{1}}(x,\theta)\right)\,dt\,d\theta \notag\\
& =\sum_{j_{1}=0}^{N}\sum_{j_{2}=0, j_{2} \neq j_{1}}^{N}\int_{\mathbb{S}^{n-1}}\int_{\mathbb{R}_{+}}w_{j_{1}}(x,\theta)\,w_{j_{2}}(z_{j_{2}-j_{1}}(x,\theta) + t\theta_{j_{2}-j_{1}}(x,\theta), \theta_{j_{2}-j_{1}}(x,\theta)) \notag\\
& \quad \cdot |\alpha(z_{-j_{1}}(x,\theta), \theta_{-j_{1}}(x,\theta))|^{2}\, f\left(z_{j_{2}-j_{1}}(x,\theta) + t\theta_{j_{2}-j_{1}}(x,\theta) \right)\,dt\,d\theta\notag\\
& + \sum_{j_{1}=0}^{N}\int_{\mathbb{S}^{n-1}}\int_{\mathbb{R}_{+}}\, w_{j_{1}}(x+t\theta,\theta)w_{j_{1}}(x,\theta) \, |\alpha(z_{-j_{1}}(x,\theta), \theta_{-j_{1}}(x,\theta))|^{2} \, f\left(x +\tau_{-}(x,\theta)\theta + t\theta \right)\,dt\,d\theta \notag\\
& =: N_{\sigma, E, \alpha, reflect}f(x) + N_{\sigma, E, \alpha, ballistic}f(x). \label{eq:normalop}
\end{align}
It follows that the full normal operator $N_{\sigma,E,\alpha}$, corresponding to the partition by smooth cutoff functions $\alpha_{k}$, is given by the sum
\begin{align*}
& N_{\sigma,E,\alpha}f(x) \\
& = \sum_{k=1}^{N_{\alpha}}\sum_{j_{1}=0}^{N_k}\sum_{j_{2}=0}^{N_k}\int_{\mathbb{S}^{n-1}}\int_{\mathbb{R}_{+}} |\alpha_{k}(z_{-j_{1}}, \theta_{-j_{1}})|^{2} w_{j_{1}}\left( x,\theta \right) w_{j_{2}}\left(z_{j_{2}-j_{1}}+ t\theta_{j_{2}-j_{1}}, \theta_{j_{2}-j_{1}}\right)\\
& \cdot f\left(z_{j_{2}-j_{1}}+ t\theta_{j_{2}-j_{1}}\right)\, dt\,d\theta
\end{align*}
We pick off the ballistic terms where $j_{1}=j_{2}$ to decompose $N_{\sigma,E,\alpha}f$ as a sum
\begin{equation*}
N_{\sigma,E,\alpha} = N_{\sigma,E,\alpha, ballistic} + N_{\sigma,E,\alpha,reflect}
\end{equation*}
with
\begin{align*}
& N_{\sigma,E,\alpha,ballistic}f(x)\\
& = \sum_{k=1}^{N_{\alpha}}\sum_{j=0}^{N_{k}}\int_{\mathbb{S}^{n-1}}\int_{\mathbb{R}_{+}} |\alpha_{k}(z_{-j}, \theta_{-j})|^{2} w_{j}\left( x,\theta \right) w_{j}\left(x + (\tau_{-}(x,\theta)+t)\theta, \theta\right)\\
& \qquad \cdot f\left(x + (\tau_{-}(x,\theta)+t)\theta \right)\, dt\,d\theta\\
& = \sum_{k=1}^{N_{\alpha}}\sum_{j=0}^{N_k}\int_{\mathbb{S}^{n-1}}\int_{\mathbb{R}} |\alpha_{k}(z_{-j}, \theta_{-j})|^{2} w_{j}\left( x,\theta \right) w_{j}\left(x + t\theta, \theta\right)f\left(x + t\theta \right)\, dt\,d\theta\\
& = \sum_{k=1}^{N_{\alpha}}\sum_{j=0}^{N_k}\int_{\mathbb{S}^{n-1}}\int_{\rn{+}} \left[ |\alpha_{k}(z_{-j}(x,\cdot) \theta_{-j}(x,\cdot))(\theta)|^{2} w_{j}\left( x,\cdot \right) w_{j}\left(x + t\theta, \cdot \right)\right]_{even}(\theta)\\
& \qquad \cdot f\left(x + t\theta \right)\, dt\,d\theta
\end{align*}
and
\begin{align*}
N_{\sigma,E,\alpha,reflect}f(x) & = \sum_{k=1}^{N_{\alpha}}\sum_{j_{1}=0}^{N_{k}}\sum_{j_{2}=0, j_{2} \neq j_{1}}^{N_k}\int_{\mathbb{S}^{n-1}}\int_{\mathbb{R}_{+}} |\alpha_{k}(z_{-j_{1}}, \theta_{-j_{1}})|^{2}\\
& \cdot w_{j_{1}}\left( x,\theta \right) w_{j_{2}}\left(z_{j_{2}-j_{1}}+ t\theta_{j_{2}-j_{1}}, \theta_{j_{2}-j_{1}}\right)f\left(z_{j_{2}-j_{1}}+ t\theta_{j_{2}-j_{1}}\right)\, dt\,d\theta.
\end{align*}
The notation $\left[ g(x,\cdot) \right]_{even}(\theta)$ is the even part of $g$ in the variable $\theta$, defined by $\left[ g(x,\cdot)\right]_{even}(\theta) = \frac{1}{2}\left[ g(x,\theta) + g(x,-\theta) \right]$. 

By Lemma 2 of \cite{xraygeneric} $N_{\sigma,E,\alpha, ballistic}$ is a classical pseudo differential operator of order $-1$ with principal symbol
\begin{equation}
a_{0}(x,\xi) = 2\pi \sum_{k=0}^{K}\sum_{j=0}^{N_{k}}\int_{\theta \in \mathbb{S}^{n-1}, \, \theta \cdot \xi = 0}|\alpha_{k}(z_{-j}, \theta_{-j})|^{2} \left| w_{j}\left( x,\theta \right)\right|^2 \,d\theta
\end{equation}
The bound on the number of reflections $N_{max}$ ensures that the integral kernel has a positive lower bound on the set of all covectors $(x,\xi)$ such that it is non vanishing at some $\theta \in \mathbb{S}^{n-1}$ normal to $\xi$. Clearly, $N_{\sigma,E,\alpha,ballistic}$ is also elliptic on the set $\mathcal{M}'$.

\subsection{Simplifying $N_{\sigma,E,\alpha,reflect}$ for the Square \label{sec:normalsimplify}}
For the reflected part of the normal operator, our goal will be to make a change of variables so that $f(z_{j_{2}-j_{1}} + t\theta_{j_{2}-j_{1}})$ becomes $f(y)$, with the hope that what we obtain is either a weakly singular integral operator (i.e. smoothing of order $-1$) or an operator with $C_{0}^{\infty}$ Schwartz kernel. It turns out that the latter will hold. This technique is very much inspired by the kind of substitution used in \cite{inversesource, hubenthal}, and also in \cite{xraygeneric} for the non-Euclidean case. For now, we will still be working with the part of the operator that avoids broken rays with corner points.

Let $(x,\theta) \in \Omega \times \mathbb{S}^{1}$, suppose we have some smooth cutoff $\alpha$ on $\Gamma_{-}(E)$. By construction, all broken rays originating from the support of $\alpha$ have the same sequence of reflection signatures. Let $\{(h_{1},v_{1}), \ldots, (h_{N},v_{N})\}$ be the ordered set of all such pairs (see Figure \ref{fig:beam}). Let $z = z_{j_{2}-j_{1}}(x,\theta)$ (i.e. $z$ is the $j_{2}^{\mathrm{th}}$ reflection point on $\partial \Omega$ of the broken ray starting at $(z_{-j_{1}}(x,\theta), \theta_{-j_{1}}(x,\theta))$). Then $(h_{j_{2}}-h_{j_{1}}, v_{j_{2}}-v_{j_{1}})$ is the reflection signature from $x$ to $z$, and so we consider 
\begin{equation*}
\widetilde{z} := R_{h_{j_{2}}-h_{j_{1}},v_{j_{2}}-v_{j_{1}}}(z) =  (\widetilde{z}^{1}, \widetilde{z}^{2})
\end{equation*}
as the associated unfolded coordinates of $z$. Up to a rotation of $\Omega$, there are essentially two possibilities depending on whether $\widetilde{z}$ lies on a top or right edge of a reflected copy of the square. We will use $\theta$ to denote an angle value in radians and $\widehat{\theta} = (\cos{\theta}, \sin{\theta})$ to denote its associated direction vector. Furthermore, if $x \in \rn{2}$, we define $x^{\perp} = (-x^{2},x^{1})$. For simplicity of notation, we let $(l_{1},l_{2}) = (h_{j_{2}}-h_{j_{1}}, v_{j_{2}}-v_{j_{1}})$.

We have one of the following equations for $\widetilde{z}$ as a function of the starting point $x$ with direction $\theta$:
\begin{align}
\widetilde{z} & = (\widetilde{z}^{1}, l_{2}) = (x^{1} + (l_{2} - x^{2})\cot{\theta}, l_{2})\\
\widetilde{z} & = (l_{1}, \widetilde{z}^{2}) = (l_{1}, x^{2} + (l_{1} - x^{1})\tan{\theta}).
\end{align}

In the integral expression of the normal operator, we first make the substitution $\eta = S_{l_{1},l_{2}}(\theta)$, which is the reflected angle after applying the billiard map $j_{2}-j_{1}$ times to $(x+\tau_{-}(x,\widehat{\theta})\widehat{\theta},\, \widehat{\theta})$. Note that $|l_{1}|+|l_{2}| = j_{2} - j_{1}$. Obviously, the Jacobian determinant of this transformation is $1$. We also rewrite $z_{j_{2}-j_{1}}(x,\widehat{\theta}) = \pi_{1} \circ T^{j_{2}-j_{1}}(x+\tau_{-}(x,\widehat{\theta})\widehat{\theta}, \widehat{\theta})$ as the function $z(x,\eta)$. In particular, we may have either
\begin{align}
z(x,\eta) & = R_{l_{1},l_{2}}^{-1}\left(x^{1} + (l_{2} - x^{2})\cot\left(S_{l_{1},l_{2}}^{-1}(\eta)\right),\quad l_{2}\right)\label{eq:zxeta1}\\
& = \left((-1)^{l_{1}+1}l_{1} + \frac{1-(-1)^{l_{1}}}{2} + (-1)^{l_{1}}\left[ x^{1} + (l_{2} - x^{2})\cot\left(S_{l_{1},l_{2}}^{-1}(\eta)\right) \right], \quad \frac{1-(-1)^{l_{2}}}{2} \right) \notag
\end{align}
or
\begin{align}
z(x,\eta) & = R_{l_{1},l_{2}}^{-1}\left(l_{1}, \quad x^{2} + (l_{1} - x^{1})\tan\left(S_{l_{1},l_{2}}^{-1}(\eta)\right) \right) \label{eq:zxeta2}\\
& = \left(\frac{1-(-1)^{l_{1}}}{2}, \quad (-1)^{l_{2}+1}l_{2} + \frac{1-(-1)^{l_{2}}}{2} + (-1)^{l_{2}}\left[ x^{2} + (l_{1} - x^{1})\tan\left(S_{l_{1},l_{2}}^{-1}(\eta)\right) \right] \right).\notag
\end{align}

\begin{figure}
\centering
\def \svgwidth{0.7\columnwidth}
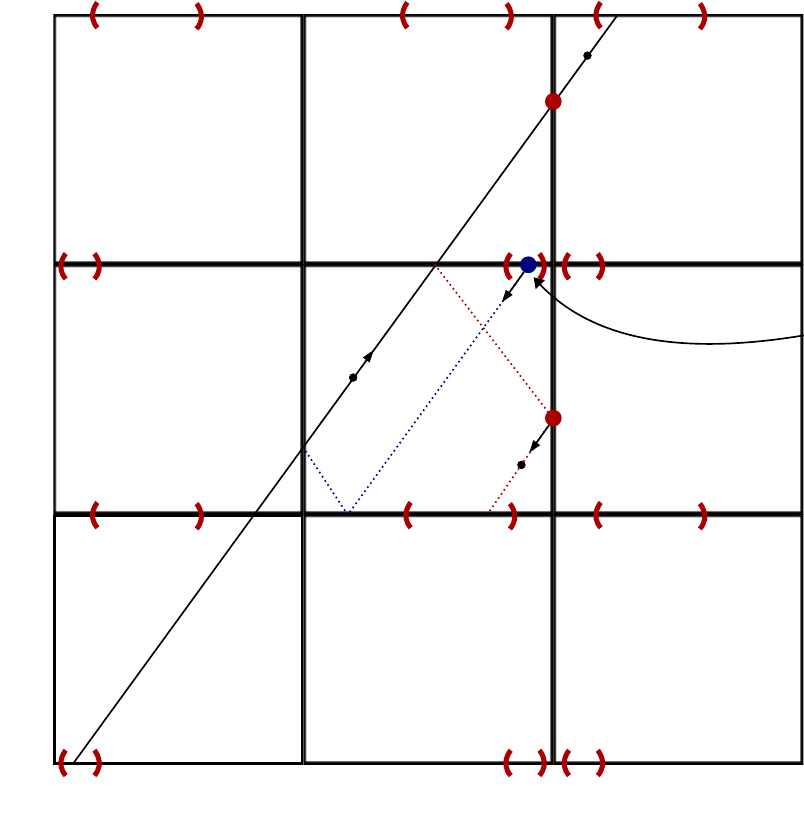
\caption{The basic setup for how we define coordinates in the integral formula for $N_{\sigma, E, \alpha}f$. Here $j_{2} =4$ and $j_{1}=2$. The original copy of $\Omega$ is in the center of the tiling, and $E$ consists of two disjoint open intervals of the boundary.\label{fig:substitution}}
\end{figure}
Thus we have an integrand involving $f(z(x,\eta) + t\widehat{\eta})$, where $\widehat{\eta} = (\cos{\eta},\sin{\eta})$. The next step is to make the substitution $y = z(x,\eta) + t\widehat{\eta}$ (see Figure \ref{fig:substitution}). We compute it's Jacobian determinant to be
\begin{align}
\left| \frac{\partial y}{\partial \eta \partial t} \right| & = |(-1)^{l_{2}}(l_{2}-x^{2})\csc{\eta} + t| = |(l_{2}-x^{2})\csc{\theta} + t| = |\widetilde{y}-\widetilde{z}|\left| \frac{l_{2}-x^{2}}{\widetilde{y}^{2} - \widetilde{z}^{2}} + 1 \right|\notag\\
& = |\widetilde{y}-\widetilde{z}|\left| \frac{\widetilde{y}^{2}-x^{2}}{\widetilde{y}^{2} - l_{2}} \right|
\end{align}
if $z(x,\eta)$ is given by (\ref{eq:zxeta1}). Similarly, we obtain
\begin{equation*}
\left| \frac{\partial y}{\partial \eta \partial t} \right| = |(-1)^{l_{1}}(l_{1} -x^{1})\sec{\eta} + t | = |\widetilde{y}-\widetilde{z}|\left| \frac{\widetilde{y}^{1}-x^{1}}{\widetilde{y}^{1} - l_{1}} \right|
\end{equation*}
when $z(x,\eta)$ is given by (\ref{eq:zxeta2}). Here we've used the fact that $\sec{\eta} = (-1)^{l_{1}}\sec{\theta}$ and $\csc{\eta} = (-1)^{l_{2}} \csc{\theta}$. In order to fully carry out the substitution in the integral, we need to write $z$ as a function of $x$ and $y$. We will need the auxiliary equation
\begin{equation}
(\widetilde{y} - \widetilde{z}) \cdot (\widetilde{z} - x)^{\perp} = 0,
\end{equation}
which is a consequence of the reflection law. In the case where $\widetilde{z}^{2}$ is undetermined, we have $\widetilde{z}^{1} = l_{1}$ and then
\begin{equation}
\widetilde{z}^{2} - \widetilde{y}^{2} = \frac{(\widetilde{y}^{2}-x^{2})(l_{1} - \widetilde{y}^{1})}{\widetilde{y}^{1} - x^{1}}.
\end{equation}
Similarly, if $\widetilde{z}^{1}$ is undetermined then
\begin{equation}
\widetilde{z}^{1} - \widetilde{y}^{1} = \frac{(\widetilde{y}^{1} - x^{1})(l_{2} - \widetilde{y}^{2})}{\widetilde{y}^{2} - x^{2}}.
\end{equation}

In the given case $\widetilde{z}^{1} = l_{1}$, the Jacobian determinant $\left| \frac{\partial y}{\partial \eta \partial t} \right|$ simplifies to
\begin{align*}
 |\widetilde{y}-\widetilde{z}|\left| \frac{\widetilde{y}^{1}-x^{1}}{\widetilde{y}^{1} - l_{1}} \right| & = \sqrt{ (\widetilde{y}^{1} - \widetilde{z}^{1})^{2} + (\widetilde{y}^{2} - x^{2})^{2} \frac{(l_{1} - \widetilde{y}^{1})^{2}}{\widetilde{y}^{1} - x^{1})^{2}}} \left| \frac{\widetilde{y}^{1}-x^{1}}{\widetilde{y}^{1} - l_{1}} \right| \\
 & = |\widetilde{y} - x|.
 \end{align*}
The same holds when $\widetilde{z}^{2} = l_{2}$. Conveniently, $|\widetilde{y}-x|$ is strictly positive for all $x,y \in \mathrm{supp}(f)$ so long as $l_{1}$ or $l_{2}$ is nonzero (i.e. $j_{2} - j_{1} \neq 0$). To see this, just note that for $(l_{1},l_{2}) \neq (0,0)$, we have $\widetilde{y} \notin \Omega$, and thus $|\widetilde{y} - x| \geq d(\mathrm{supp}(f), \partial \Omega) > 0$. Thus the reciprocal factor $\frac{1}{|\widetilde{y}-x|}$ resulting from the change of variables is smooth and bounded as a function of $y$ and $x$. Using again the fact that $\cos{\eta} = (-1)^{l_{1}}\cos{\theta}$, $\sin{\eta} = (-1)^{l_{2}}\sin{\theta}$ we have that
\begin{equation*}
\widehat{\eta} = ((-1)^{l_{1}}\cos{\theta},(-1)^{l_{2}}\sin{\theta}) = \frac{1}{|\widetilde{y}-x|}( (-1)^{l_{1}}(\widetilde{y}^{1}- x^{1}), (-1)^{l_{2}}(\widetilde{y}^{2} - x^{2})).
\end{equation*}
Altogether, we obtain that $N_{\sigma, E,\alpha, reflect}f(x)$ consists of a sum of terms of the form
\begin{equation}
\int_{\rn{2}} |\alpha_{k}(T^{-j_{2}}(y + \tau_{-}(y,\widehat{\eta})\widehat{\eta}, \widehat{\eta}))|^{2} w_{j_{1}}\left( x,\frac{\widetilde{y}-x}{|\widetilde{y}-x|} \right) w_{j_{2}}\left(y, \widehat{\eta}\right)f\left(y\right)\, \frac{dy}{|\widetilde{y} -x|}.\label{eq:normalreflectedterm}
\end{equation}

Note that one must be very careful in the above formulation in order to show that the integrand in (\ref{eq:normalreflectedterm}) is smooth where it is supported. Consider the case if $\alpha_{k}$ is non vanishing for a broken ray which starts at a corner point, but still none of the broken rays in the support of $\alpha$ contain another corner point after some positive number of reflections. Within the factor of (\ref{eq:normalreflectedterm}) given by
\begin{equation*}
|\alpha_{k}(T^{-j_{2}}(y + \tau_{-}(y,\widehat{\eta})\widehat{\eta}, \widehat{\eta}))|^{2},
\end{equation*}
we see that $T^{-j_{2}}(y + \tau_{-}(y,\widehat{\eta})\widehat{\eta}, \widehat{\eta})$ fails to be smooth for $(y, \eta)$ that are pulled back to this corner point. However, as previously discussed, the construction of $\alpha_{k}$ ensures that it is smooth on $\rn{2}\times \mathbb{S}^{1}$. Recall that $B = B\left( \left( \frac{1}{2}, \frac{1}{2} \right), \frac{\sqrt{2}}{2}\right)$ is the ball that circumscribes $\Omega$. By a possible rotation of $\Omega$, we may as well assume that $h_{j},v_{j}$ are all nonnegative. Clearly each sequence $\{h_{j}\}$, $\{v_{j}\}$ is also nondecreasing in this case. Next define the function $q_{j}(y, \eta)$ for $1 \leq j \leq N$ as the \textit{final} intersection point of the line $R_{h_{j},v_{j}}(y) + t \widehat{S_{h_{j},v_{j}}^{-1}(\eta)}$ with $\partial B$ for $t \leq 0$. We also define $q_{0}(y,\eta) = y + \tau_{-,\partial B}(y,\eta) \widehat{\eta}$ where $\tau_{-,\partial B}(y,\widehat{\eta})$ is the negative distance to $\partial B$ from $y$ in the direction $-\widehat{\eta}$, defined for $(y,\widehat{\eta}) \in B \times \mathbb{S}^{1}$. We can then replace $\alpha_{k}(T^{-j_{2}}(y + \tau_{-}(y,\widehat{\eta})\widehat{\eta}, \widehat{\eta}))$ with
\begin{equation}
\alpha_{k}(q_{j_{2}}(y,\widehat{\eta}), \, \widehat{S_{h_{j_{2}}, v_{j_{2}}}^{-1}(\eta)}). \label{eq:smoothalphakterm}
\end{equation}
Clearly, (\ref{eq:smoothalphakterm}) depends smoothly on $y, \eta$.

We observe that the definitions of $w_{j_{1}}, w_{j_{2}}$ also involve $\tau_{-}$, which is not smooth on $\Omega \times \mathbb{S}^{1}$. However, since $\sigma$ is compactly supported inside of $\Omega$ on some smooth subdomain, we can modify $\tau_{-}$ inside of $w_{j_{1}},w_{j_{2}}$ as necessary to be smooth without changing the integrals. In particular, we can use the functions $q_{j}$ to define the weight functions instead of the billiard map $T$ and boundary distance function $\tau_{-}$. Therefore, (\ref{eq:normalreflectedterm}) is a smoothing integral operator.

\subsection{Incorporating Corners\label{sec:corners}}
Recall from the proof of Proposition \ref{thm:microlocalsquare} that by unfolding the broken rays, corner points posed no problem. This meant we didn't have to remove any covectors from $\mathcal{M}'$ whose corresponding broken rays passed through a corner point. It is thus important to also consider broken rays with corners for the normal operator so as to make $\mathcal{M}'$ as big as possible. However, for this we no longer have the option of path unfolding, so it will be necessary to treat reflections more directly.

While the corners of $\partial \Omega$ have no well-defined normal vector, we can make sense of a broken ray that touches a corner point by realizing it as a limit of broken rays that avoid the corner. This is described in detail in the 2-dimensional case in the work of Eskin in \cite{eskin}. Essentially, in 2-dimensions there are two reflection directions at a corner point that can be obtained as a limit of broken rays starting at some fixed point $x \in \Omega$ and with reflection point approaching the given corner. In higher dimensions, one can obtain a cone of possible reflection directions, which is thus a hypersurface. However, for the square (or even the $n$-cube), these limiting rays all conveniently coincide. To see this, let $x\in \Omega$ and let $c$ be a corner point. Define $\theta_{c}$ as the angle between $c-x$ and the horizontal axis. For $\theta$ close to $\theta_{c}$, the broken ray from $x$ in the direction $\widehat{\theta}$ will intersect twice near $c$ and return at an angle $\pi+\theta$ with the positive horizontal direction (i.e. it will return in the direction $-\widehat{\theta}$). Therefore, the limit of such rays shows that $\gamma_{x,\theta_{c}}$ unambiguously reflects back towards $x$. This is also consistent with what one expects from path unfolding.

The following simple lemma is useful in that it significantly limits the number of cases to consider of broken rays with corner points.
\begin{lemma}A regular broken ray starting from $E$ contains at most two corner points, and if it has two, then the first one is its starting point.\end{lemma}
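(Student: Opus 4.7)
The plan is to leverage the corner reversal rule established just before the lemma: at a corner $c$ of $\Omega$, the limiting reflection rule sends an incoming direction $\widehat\theta$ to $-\widehat\theta$, so the trajectory time-reverses at $c$.

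First I would show that if an intermediate reflection point of a regular broken ray $\gamma_{x_0,\theta_0}$ is a corner, then the ray retraces back and terminates at $x_0$. Let $c$ be the first such intermediate corner, preceded by flat-edge reflections at $p_1,\ldots,p_{k-1}$. By the reversal rule, the segment leaving $c$ is the incoming segment traversed backwards, so it lands again on $p_{k-1}$. A short computation shows that reflecting the reversed direction across the flat edge at $p_{k-1}$ yields the reverse of the direction that originally entered $p_{k-1}$; hence the ray proceeds backwards to $p_{k-2}$. Iterating this inductive step, the post-$c$ trajectory reproduces $p_{k-1},p_{k-2},\ldots,p_1$ in reverse order and ends with a return to $x_0\in E$. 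Because the ray is regular, it terminates as soon as it meets $E$, so this return is the end. In particular no subsequent intermediate reflection can be a corner, and the terminal point equals $x_0$.

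Second I would do the case analysis. Any corner on $\gamma_{x_0,\theta_0}$ must be either the starting point $x_0$, an intermediate reflection, or the terminal point $y_0$. By the first step at most one intermediate reflection is a corner, and whenever one is, $y_0=x_0$, so the terminal point contributes nothing new. Hence at most two distinct corner points can occur, and in every scenario that realizes two, $x_0$ is among them: either $x_0$ together with an intermediate corner (with $y_0=x_0$), or $x_0$ together with $y_0$ (with no intermediate corner). In both scenarios $x_0$ is a corner, which is precisely the claim. The only subtle step is the retracing induction at $p_{k-1}$, which is a one-line computation using that reflecting $-\theta_{\text{in}}$ across a flat edge produces $-\theta_{\text{out}}$; no deeper obstruction arises.
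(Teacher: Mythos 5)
Your proof is correct and rests on the same mechanism as the paper's: the direction-reversal rule at a corner forces the ray to retrace its earlier reflection points (using that the flat reflection law is a linear involution) and hence to return to $E$ and terminate, so a second corner can only be reached if the first corner is the starting point. Your version is simply a more explicit, inductive rendering of the paper's terse periodicity argument.
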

\begin{proof}Suppose we have a broken ray $\gamma_{x_{0},\theta_{0}}$ with two distinct corner points. Since reflections from corners return in the opposite direction from the direction of incidence, any broken ray between them will repeat periodically. Thus, either one of the corner points lies in $E$, or there is a point along the broken ray between them that lies in $E$. But this implies the broken ray would have terminated before hitting both corners.\end{proof}

We will use $c \neq x_{0}$ to refer to a corner in the broken ray if it has one, and we will allow for the possibility that $x_{0}$ itself is a corner point. However, as previously discussed, $x_{0}$ being a corner point does not present any problem with regard to the smoothness of the integrands involved in the expansion of the localized normal operator (see the discussion following (\ref{eq:normalreflectedterm})). As before, let $K \Subset \Omega$ be the set on which we assume $f$ has support. 

So given our broken ray $\gamma_{x_{0},\theta_{0}}$, we assume it passes through a corner point $c \neq x_{0}$ at the $m^{\mathrm{th}}$ reflection, where $1 \leq m \leq N-1$, and $N-1$ is the total number of reflections of the curve. Choose a neighborhood $U$ consisting of $(x,\theta) \in \Gamma_{-}(E)$ that are sufficiently close to $(x_{0},\theta_{0})$ in the following sense. At the $m^{\mathrm{th}}$ reflection, most nearby broken rays to $\gamma_{x_{0},\theta_{0}}$ will undergo a reflection along a very short segment near the corner on which $f$ vanishes and $\sigma = 0$. In particular, such segments do not affect the weights $w_{j}$ and so we will be able to discard them. Thus, we choose $U$ small enough so that at the $m$th reflection, all nearby broken rays to $\gamma_{x_{0},\theta_{0}}$ either pass through the corner $c$ or hit close enough to the corner so as to reflect into a short path that contributes nothing to the integral. Note that for such broken rays from $U$ passing very near to the corner $c$ (but not actually touching it), after the $(m+1)^{\mathrm{th}}$ reflection they will be oriented in a direction opposite their respective directions during the $(m-1)^{\mathrm{th}}$ segment. We also choose $U$ small enough so as not to introduce any new corner points among the included broken rays.

Given such an open set $U$, we let $\mathbbm{1}_{U}(x,\theta)$ be a function on $\rn{2} \times \mathbb{S}^{1}$ which is constant on lines and satisfies
\begin{equation}
\mathbbm{1}_{U}(x,\theta) = \left\{ \begin{array}{cc}
1 & \textrm{ if $(\gamma_{x,\theta}(t), \dot{\gamma}_{x,\theta}(t)$ passes through $U$ and intersects the corner $c$,}\\
0 & \textrm{ otherwise }
\end{array}\right.
\end{equation}
Also let $\alpha(x,\theta)$ be a smooth cutoff function in $\rn{2} \times \mathbb{S}^{1}$ which vanishes outside the set of lines associated with $U$. We write the localized broken ray transform as
\begin{align}
I_{\sigma, E, \alpha}f(x,\theta) & = \sum_{j=0}^{N} \int_{\rn{+}}(1- \mathbbm{1}_{U}(x,\theta))\alpha(x,\theta)[w_{j}f](z_{j}(x,\theta) + t\theta_{j}(x,\theta), \theta_{j}(x,\theta))\,dt \notag\\
& + \sum_{j=0}^{N-1} \int_{\rn{+}}\mathbbm{1}_{U}(x,\theta)\alpha(x,\theta)[w_{j}f](z_{j}(x,\theta) + t\theta_{j}(x,\theta), \theta_{j}(x,\theta))\,dt.
\end{align}
We then have
\begin{align}
I_{\sigma, E, \alpha}^{*}g(x) & = \sum_{j=0}^{N} \int_{\mathbb{S}^{1}}[(1- \mathbbm{1}_{U})\alpha g](z_{-j}(x,\theta), \theta_{-j}(\theta))w_{j}(x,\theta)\,d\theta \notag\\
& + \sum_{j=0}^{N-1} \int_{\mathbb{S}^{1}}[\mathbbm{1}_{U}\alpha g](z_{-}(x,\theta), \theta_{-j}(x,\theta))w_{j}(x,\theta)\,d\theta.
\end{align}
Finally, the normal operator is given by
\begin{align}
I_{\sigma, E, \alpha}^{*}I_{\sigma, E, \alpha}f(x) & = \sum_{j_{1}=0}^{N}\sum_{j_{2}=0}^{N}\int_{\mathbb{S}^{1}}\int_{\rn{+}}[(1-\mathbbm{1}_{U})|\alpha|^{2}](z_{-j_{1}}(x,\theta),\theta_{-j_{1}}(x,\theta))w_{j_{1}}(x,\theta) \notag\\
& \quad \cdot [w_{j_{2}}f](z_{j_{2}-j_{1}}(x,\theta) + t\theta_{j_{2}-j_{1}}(x,\theta), \theta_{j_{2}-j_{1}}(x,\theta))\, dt \,d\theta \notag\\
& +  \sum_{j_{1}=0}^{N-1}\sum_{j_{2}=0}^{N-1}\int_{\mathbb{S}^{1}}\int_{\rn{+}}[\mathbbm{1}_{U}|\alpha|^{2}](z_{-j_{1}}(x,\theta),\theta_{-j_{1}}(x,\theta))w_{j_{1}}(x,\theta) \notag\\
& \quad \cdot [w_{j_{2}}f](z_{j_{2}-j_{1}}(x,\theta) + t\theta_{j_{2}-j_{1}}(x,\theta), \theta_{j_{2}-j_{1}}(x,\theta))\, dt \,d\theta.
\end{align}
Here we've used the fact that $(1-\mathbbm{1}_{U})^{2} = 1 - \mathbbm{1}_{U}$. Then break off the part with $j_{2} = j_{1}$ to obtain
\begin{align*}
N_{\sigma, E, \alpha, ballistic}f(x) & =  \sum_{j=0}^{N}\int_{\mathbb{S}^{1}}\int_{\rn{+}}[(1-\mathbbm{1}_{U})|\alpha|^{2}](z_{-j}(x,\theta),\theta_{-j}(x,\theta))w_{j}(x,\theta) \notag\\
& \quad \cdot [w_{j}f](x + \tau_{-}(x,\theta)\theta+ t\theta, \theta)\, dt \,d\theta \notag\\
& +  \sum_{j=0}^{N-1}\int_{\mathbb{S}^{1}}\int_{\rn{+}}[\mathbbm{1}_{U}|\alpha|^{2}](z_{-j}(x,\theta),\theta_{-j}(x,\theta))w_{j}(x,\theta) \notag\\
& \quad \cdot [w_{j}f](x + \tau_{-}(x,\theta)\theta + t\theta, \theta)\, dt \,d\theta.\notag.
\end{align*}
Now we have to be a bit careful in order to combine both summands. In the first summand, whenever $(z_{-m}(x,\theta), \theta_{-m}(x,\theta))$ is in the support of $(1-\mathbbm{1}_{U})\alpha^{2}$, we know that $x+\tau_{-}(x,\theta)\theta + t\theta$ is a line segment very near the corner $c$ where $f$ vanishes. Thus we can throw out the term involving $j = m$. Now for the terms with $j < m$, we may immediately combine the summands into one, thus eliminating the presence of $\mathbbm{1}_{U}$. That is, we have
\begin{align}
N_{\sigma, E, \alpha, ballistic}f(x) & = \sum_{j=0}^{m-1}\int_{\mathbb{S}^{1}}\int_{\rn{+}}|\alpha|^{2}(z_{-j}(x,\theta),\theta_{-j}(x,\theta))w_{j}(x,\theta) \notag\\
& \quad \cdot [w_{j}f](x + \tau_{-}(x,\theta)\theta+ t\theta, \theta)\, dt \,d\theta \notag\\
& + \sum_{j_{1}=m+1}^{N}\int_{\mathbb{S}^{1}}\int_{\rn{+}}[(1-\mathbbm{1}_{U})|\alpha|^{2}](z_{-j_{1}}(x,\theta),\theta_{-j_{1}}(x,\theta))w_{j_{1}}(x,\theta) \notag\\
& \quad \cdot [w_{j_{1}}f](x + \tau_{-}(x,\theta)\theta+ t\theta, \theta)\, dt \,d\theta \notag\\
& + \sum_{j_{2}=m}^{N-1}\int_{\mathbb{S}^{1}}\int_{\rn{+}}[\mathbbm{1}_{U}|\alpha|^{2}](z_{-j_{2}}(x,\theta),\theta_{-j_{2}}(x,\theta))w_{j_{2}}(x,\theta) \notag\\
& \quad \cdot [w_{j_{2}}f](x + \tau_{-}(x,\theta)\theta + t\theta, \theta)\, dt \,d\theta. \label{eq:normalballistic2}
\end{align}

\begin{figure}
\centering
\def \svgwidth{0.7\columnwidth}
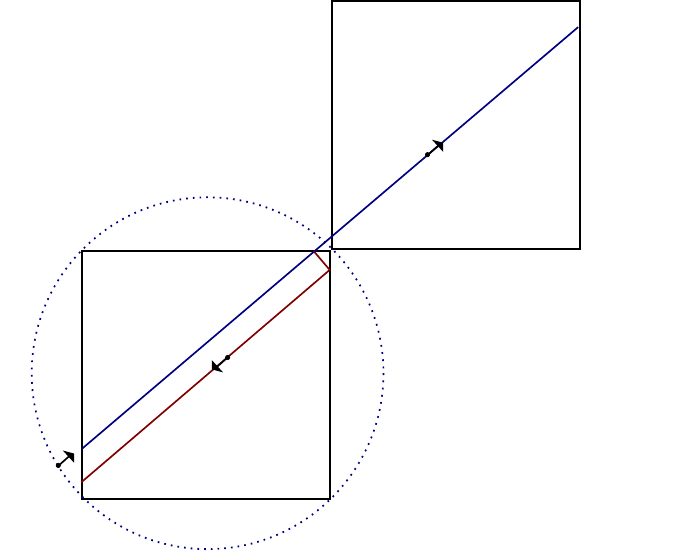
\caption{Here we demonstrate the construction of $q_{1}(x,\theta)$ in the case of a beam of broken rays very close to a corner point.\label{fig:qm}}
\end{figure}

For the second and third terms of (\ref{eq:normalballistic2}), we use the fact that for each $j_{2}$ with $m \leq j_{2} \leq N-1$, $x+(\tau_{-}(x,\theta)+t)\theta$ in the third term has the same reflection signature $(l_{1},l_{2})$ as in the second term, but with $j_{1} = j_{2} + 1$. This has to do with the supports of the corresponding cutoff functions. Fix $j_{1},j_{2}$ such that $m \leq j_{2} = j_{1}-1$, and let $(l_{1},l_{2})$ be the reflection signature of $(x,\theta)$ relative to $(z_{-j_{1}}(x,\theta), \theta_{-j_{1}}(x,\theta))$ in the second term or $(z_{-j_{2}}(x,\theta), \theta_{-j_{2}}(x,\theta))$ in the third. The key idea is that both reflection signatures will be the same. Recall the circumscribed disk $B((\frac{1}{2},\frac{1}{2}), \frac{\sqrt{2}}{2})$. Similar to the case with no corner point, we consider the sequence of reflection signatures $\{(h_{1},v_{1}), (h_{2},v_{2}), \ldots, (h_{N-1}, v_{N-1}) \}$ in the unfolded domain where all of the unfolded broken rays from $U$ touch and over which $f$ has nonzero integral. Thus we've cut out the tiny ``bow-tie" region near the corner $c$, which is why there are only $N-1$ such pairs. Also recall for $0 \leq m \leq N-1$ the functions $q_{m}(x,\theta)$ as the \textit{final} intersection point of the line $R_{h_{m},v_{m}}(x) + tS_{h_{m},v_{m}}^{-1}(\theta)$ with $\partial B$ for $t \leq 0$. Clearly, $q_{m}$ is smooth on the set of $(x,\theta)$ for which the integrand doesn't vanish, since $\partial B$ is smooth (see Figure \ref{fig:qm}). We then have a local regularized version of the weight function $w_{j}$ given by
\begin{align}
w_{reg,j}(x,\theta) & = \exp\left( -\int_{\rn{+}}\sigma(x-\tau\theta, \theta)\,d\tau \right) \notag\\
& \quad  \cdot \prod_{m=1}^{j}\exp\left( -\int_{\rn{+}}\sigma(q_{m}(x,\theta) + \tau S_{h_{m}, v_{m}}^{-1}(\theta), S_{h_{m}, v_{m}}^{-1}(\theta))\,d\tau \right),
\end{align}
for $0 \leq j \leq N-1$. By construction, $w_{reg,j}(x,\theta)$ coincides with $w_{j}(x,\theta)$ or $w_{j-1}(x,\theta)$ on the broken rays we've isolated, with the index depending on whether the broken ray intersects $c$. This allows us to rewrite the ballistic part of the normal operator as
\begin{align*}
& N_{\sigma, E, \alpha, ballistic}f(x) \\
& = \sum_{j=0}^{m-1}\int_{\mathbb{S}^{1}}\int_{\rn{+}}|\alpha|^{2}(z_{-j}(x,\theta),\theta_{-j}(x,\theta))w_{j}(x,\theta)  [w_{j}f](x + \tau_{-}(x,\theta)\theta+ t\theta, \theta)\, dt \,d\theta \notag\\
& + \sum_{j=m}^{N-1}\int_{\mathbb{S}^{1}}\int_{\rn{+}}|\alpha|^{2}(q_{j}(x,\theta), S_{h_{j}, v_{j}}^{-1}(\theta))w_{reg,j}(x,\theta) [w_{reg,j}f](x + \tau_{-}(x,\theta)\theta+ t\theta, \theta)\, dt \,d\theta \notag.
\end{align*}
From here, we can make the change of variables $s = t + \tau_{-}(x,\theta)$. Then from Lemma 2 of \cite{xraygeneric} we have a classical pseudodifferential operator of order $-1$.

Now we need to deal with $N_{\sigma, E, \alpha, reflect}$. However, the approach is similar. Recall we have
\begin{align}
N_{\sigma, E, \alpha, reflect}f(x) & = \sum_{j_{1}=0}^{N}\sum_{j_{2}=0, j_{2}\neq j_{1}}^{N}\int_{\mathbb{S}^{1}}\int_{\rn{+}}[(1-\mathbbm{1}_{U})|\alpha|^{2}](z_{-j_{1}}(x,\theta),\theta_{-j_{1}}(x,\theta))w_{j_{1}}(x,\theta) \notag\\
& \quad \cdot [w_{j_{2}}f](z_{j_{2}-j_{1}}(x,\theta) + t\theta_{j_{2}-j_{1}}(x,\theta), \theta_{j_{2}-j_{1}}(x,\theta))\, dt \,d\theta \notag\\
& +  \sum_{j_{1}=0}^{N-1}\sum_{j_{2}=0, j_{2}\neq j_{1}}^{N-1}\int_{\mathbb{S}^{1}}\int_{\rn{+}}[\mathbbm{1}_{U}|\alpha|^{2}](z_{-j_{1}}(x,\theta),\theta_{-j_{1}}(x,\theta))w_{j_{1}}(x,\theta) \notag\\
& \quad \cdot [w_{j_{2}}f](z_{j_{2}-j_{1}}(x,\theta) + t\theta_{j_{2}-j_{1}}(x,\theta), \theta_{j_{2}-j_{1}}(x,\theta))\, dt \,d\theta. \label{eq:normalreflectcorner1}
\end{align}
Again suppose that the broken ray $\gamma_{x_{0},\theta_{0}}$ intersects a corner $c$ at the $m$th reflection, where $1 \leq m \leq N-1$. We claim that in the first summation above, the terms with $j_{2} = m$ vanish. To see this, note that if $(x,\theta)$ is such that $z_{-j_{1}}(x,\theta),\theta_{-j_{1}}(x,\theta)$ is in the support of $(1-\mathbbm{1}_{U})|\alpha|^{2}$, then reflecting $j_{2} = m$ times from that point will put us in the small ``bow-tie" region near $c$ on which the corresponding line integral vanishes. So we rewrite the first term in (\ref{eq:normalreflectcorner1}) as
\begin{align}
 & \sum_{j_{1}=0}^{N}\sum_{j_{2}=0, j_{2}\neq j_{1}, j_{2}\neq m}^{N}\int_{\mathbb{S}^{1}}\int_{\rn{+}}[(1-\mathbbm{1}_{U})|\alpha|^{2}](z_{-j_{1}}(x,\theta),\theta_{-j_{1}}(x,\theta))w_{j_{1}}(x,\theta) \notag\\
& \quad \cdot [w_{j_{2}}f](z_{j_{2}-j_{1}}(x,\theta) + t\theta_{j_{2}-j_{1}}(x,\theta), \theta_{j_{2}-j_{1}}(x,\theta))\, dt \,d\theta. \label{eq:normalreflectcorner2}
\end{align}
The important thing is that in (\ref{eq:normalreflectcorner2}) the reflection signature of $z_{j_{2}-j_{1}}$ relative to $z_{-j_{1}}$ for each pair $(j_{1},j_{2})$ is the same wherever $\alpha^{2}(1-\mathbbm{1}_{U})$ is supported. 

As in \S \ref{sec:normalsimplify}, for each pair of indices $(j_{1},j_{2})$, we consider the reflection signature when going from $(x,\theta)$ to $(z_{j_{2}-j_{1}}, \theta_{j_{2}-j_{1}})$. This is given by $(h_{j_{2}}- h_{j_{1}}, v_{j_{2}}-v_{j_{1}})$. So as in (\ref{eq:normalreflectedterm}) we define
\begin{align*}
\widetilde{y}_{j_{1},j_{2}} & = R_{(h_{j_{2}}-h_{j_{1}}, v_{j_{2}}-v_{j_{1}})}(y)\\
\widehat{\eta} & = \frac{1}{|\widetilde{y}_{j_{1},j_{2}}-x|}( (-1)^{h_{j_{2}}-h_{j_{1}}}(\widetilde{y}_{j_{1},j_{2}}^{1}- x^{1}), (-1)^{v_{j_{2}}-v_{j_{1}}}(\widetilde{y}_{j_{1},j_{2}}^{2} - x^{2}))\\
\widehat{\theta} & = \frac{\widetilde{y}_{j_{1},j_{2}} - x}{|\widetilde{y}_{j_{1},j_{2}} - x|},
\end{align*}
and we can rewrite
\begin{align}
 & N_{\sigma, E, \alpha, reflect}f(x) \notag\\
 & = \sum_{j_{1}=0}^{N-1}\sum_{j_{2}=0, j_{2}\neq j_{1}}^{N-1}\int_{\rn{2}}\left[(1-\mathbbm{1}_{U})|\alpha|^{2}\right] \left(q_{j_{1}}\left(x,\frac{\widetilde{y}_{j_{1},j_{2}} - x}{|\widetilde{y}_{j_{1},j_{2}} - x|}\right), S_{h_{j_{1}},v_{j_{1}}}^{-1}\left(\frac{\widetilde{y}_{j_{1},j_{2}} - x}{|\widetilde{y}_{j_{1},j_{2}} - x|}\right)\right)\notag\\
 & \quad \cdot w_{reg,j_{1}}\left(x,\frac{\widetilde{y}_{j_{1},j_{2}} - x}{|\widetilde{y}_{j_{1},j_{2}} - x|}\right) \, w_{reg,j_{2}}(y, \widehat{\eta})\, f(y) \, \frac{dy}{|\widetilde{y}_{j_{1},j_{2}} - x|}\notag\\
& +  \sum_{j_{1}=0}^{N-1}\sum_{j_{2}=0, j_{2}\neq j_{1}}^{N-1}\int_{\rn{2}}[\mathbbm{1}_{U}|\alpha|^{2}]\left(q_{j_{1}}\left(x,\frac{\widetilde{y}_{j_{1},j_{2}} - x}{|\widetilde{y}_{j_{1},j_{2}} - x|}\right), S_{h_{j_{1}},v_{j_{1}}}^{-1}\left(\frac{\widetilde{y}_{j_{1},j_{2}} - x}{|\widetilde{y}_{j_{1},j_{2}} - x|}\right)\right)\notag\\
 & \quad \cdot w_{reg,j_{1}}\left(x,\frac{\widetilde{y}_{j_{1},j_{2}} - x}{|\widetilde{y}_{j_{1},j_{2}} - x|}\right) \, w_{reg,j_{2}}(y, \widehat{\eta})\, f(y) \, \frac{dy}{|\widetilde{y}_{j_{1},j_{2}} - x|}\notag\\
 & = \sum_{j_{1}=0}^{N-1}\sum_{j_{2}=0, j_{2}\neq j_{1}}^{N-1}\int_{\rn{2}}\left| \alpha \left(q_{j_{1}}\left(x,\frac{\widetilde{y}_{j_{1},j_{2}} - x}{|\widetilde{y}_{j_{1},j_{2}} - x|}\right), S_{h_{j_{1}},v_{j_{1}}}^{-1}\left(\frac{\widetilde{y}_{j_{1},j_{2}} - x}{|\widetilde{y}_{j_{1},j_{2}} - x|}\right)\right) \right|^{2}\notag\\
 & \quad \cdot w_{reg,j_{1}}\left(x,\frac{\widetilde{y}_{j_{1},j_{2}} - x}{|\widetilde{y}_{j_{1},j_{2}} - x|}\right) \, w_{reg,j_{2}}(y, \widehat{\eta})\, f(y) \, \frac{dy}{|\widetilde{y}_{j_{1},j_{2}} - x|}\label{eq:normalreflectbigkahuna}.
\end{align}

Finally, we are ready to prove the result about the decomposition of $N_{\sigma, E, \alpha}$.
\begin{proof}[Proof of Proposition \ref{prop:normaldecomposition}] Recall that we have already shown in the discussion, see (\ref{eq:normalop}), that
\begin{equation*}
N_{\sigma, E, \alpha} = N_{\sigma, E, \alpha, ballistic} + N_{\sigma, E, \alpha, reflect},
\end{equation*}
where $N_{\sigma, E, \alpha, ballistic}$ is a classical pseudo differential operator of order $-1$ that is elliptic on $\mathcal{M}$ (more generally on the subset of the cotangent space given by $\mathcal{M}'$). So we have a parametrix $Q$ of $N_{\sigma, E,\alpha, ballistic}$ of order $1$ that is elliptic on $\mathcal{M}$.

For the reflected part, we have a sum of terms of the form (\ref{eq:normalreflectedterm}) in the case of a neighborhood of broken rays that don't have any corner points (except possibly as a starting point), or $N_{\sigma, E, \alpha, reflect}$ has the form of (\ref{eq:normalreflectbigkahuna}), both of which have $C_{0}^{\infty}$ Schwartz kernels. Applying $Q$ to both sides of (\ref{eq:normalop}) yields the result.
\end{proof}

\begin{proposition}Under the conditions of Theorem \ref{thm:stabilityestimate}, without assuming that $I_{\sigma, E, \alpha}$ is injective,
\begin{enumerate}
\item[(a)] one has the a priori estimate
\begin{equation*}
\|f\|_{L^{2}(K)} \leq C\|N_{\sigma, E, \alpha}f\|_{H^{1}(\Omega)} + C_{s}\|f\|_{H^{-s}(\Omega)}, \quad \forall s;
\end{equation*}
\item[(b)]  $\mathrm{Ker}{\, I_{\sigma, E, \alpha}}$ is finite dimensional and included in $C^{\infty}(K)$.
\end{enumerate} \label{prop:aprioristability}
\end{proposition}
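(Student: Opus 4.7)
The plan is to turn Proposition \ref{prop:normaldecomposition} into an a priori estimate via a standard elliptic parametrix argument. Since $N_{\sigma,E,\alpha,ballistic}$ is a classical $\Psi$DO of order $-1$ that is elliptic on $\mathcal{M}'$, and since $T^{*}K \setminus 0 \subset \mathcal{M}'$ by hypothesis, I would choose a properly supported classical $\Psi$DO parametrix $Q$ of order $+1$ such that
\begin{equation*}
QN_{\sigma,E,\alpha,ballistic} = \mathrm{Id} + R_{0},
\end{equation*}
where $R_{0}$ has Schwartz kernel smooth in a neighborhood of $K \times K$. Because the kernel of $N_{\sigma,E,\alpha,reflect}$ is in $C_{0}^{\infty}(\Omega \times \Omega)$, the composition $QN_{\sigma,E,\alpha,reflect}$ is again smoothing near $K$. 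Combining these yields the key identity
\begin{equation*}
f = QN_{\sigma,E,\alpha}f - R_{0}f - QN_{\sigma,E,\alpha,reflect}f,
\end{equation*}
valid on a neighborhood of $K$ for any $f \in L^{2}(K)$ (extended by zero outside).

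For part (a), I would take $L^{2}(K)$ norms of this identity. The mapping property $Q\colon H^{1}(\Omega) \to L^{2}(\Omega)$ bounds the first term by $C\|N_{\sigma,E,\alpha}f\|_{H^{1}(\Omega)}$. The smoothing operators $R_{0}$ and $QN_{\sigma,E,\alpha,reflect}$ each extend to continuous maps $H^{-s}(\Omega) \to L^{2}(K)$ for every $s \geq 0$, producing the $C_{s}\|f\|_{H^{-s}(\Omega)}$ contribution and finishing the estimate.

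For part (b), I would first observe that $\mathrm{Ker}\, I_{\sigma,E,\alpha} = \mathrm{Ker}\, N_{\sigma,E,\alpha}$, since $\langle N_{\sigma,E,\alpha}f, f \rangle = \|I_{\sigma,E,\alpha}f\|^{2}$. Applied to $f$ in this kernel with $s=1$, estimate (a) reduces to $\|f\|_{L^{2}(K)} \leq C\|f\|_{H^{-1}(\Omega)}$. Since $L^{2}(K) \hookrightarrow H^{-1}(\Omega)$ is compact by Rellich, a bounded sequence in the kernel is precompact in $H^{-1}$, hence by this estimate also precompact in $L^{2}(K)$, so the closed unit ball of the kernel is compact in $L^{2}$ and the kernel must be finite dimensional. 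For smoothness, any $f$ in the kernel satisfies $N_{\sigma,E,\alpha,ballistic}f = -N_{\sigma,E,\alpha,reflect}f \in C^{\infty}$; substituting into the parametrix identity gives $f = -R_{0}f - QN_{\sigma,E,\alpha,reflect}f$, which is smooth near $K$. Combined with $\mathrm{supp}\, f \subset K$, this yields $f \in C_{0}^{\infty}(K)$.

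The main subtlety I anticipate is purely one of bookkeeping: one has to take $Q$ properly supported in a neighborhood of $K$ contained in $\mathcal{M}$, so that the compositions $QN_{\sigma,E,\alpha,reflect}$ and the remainder $R_{0}$ are genuinely smoothing on distributions supported in $K$, rather than smoothing only microlocally on the elliptic set. Once $Q$ is chosen this way, no new ideas are needed beyond the structure already supplied by Proposition \ref{prop:normaldecomposition}.
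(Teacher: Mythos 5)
Your argument is correct and follows essentially the same route as the paper: apply the parametrix $Q$ from Proposition \ref{prop:normaldecomposition} to get $f$ plus a smoothing remainder, read off the a priori estimate from the mapping properties of $Q$ and the smoothing terms, and deduce (b) from the resulting identity $(\mathrm{Id}+S)f=0$ on the kernel. The only cosmetic difference is that you establish finite dimensionality via Rellich compactness combined with estimate (a), whereas the paper cites the Fredholm alternative for $\mathrm{Id}+S$ with $S$ compact directly; these are the same standard argument.
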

\begin{proof}The proof is essentially the same as that for (\cite{xraygeneric}, Proposition 3). To prove part (a), by Proposition \ref{prop:normaldecomposition} we have that
\begin{equation*}
QN_{\sigma, E, \alpha}f = f + QN_{\sigma, E, \alpha, reflect}f + S_{1}f =: f + S_{2}f.
\end{equation*}
$S_{2}$ has $C_{0}^{\infty}$ Schwartz kernel $S_{2}(x,y)$, and by elliptic regularity we have
\begin{equation*}
\|f\|_{L^{2}(K)} - C_{s}\|f\|_{H^{-s}(\Omega)} \leq \|QN_{\sigma, E, \alpha}f\|_{L^{2}(K)} \leq C \|f\|_{H^{1}(\Omega)}
\end{equation*}
as desired.

For part (b), if $f \in \mathrm{Ker}{\, I_{\sigma, E, \alpha} }$, then $(\mathrm{Id} + S_{2})f = 0$, and also $S_{2}$ is a compact operator on $L^{2}(K)$ with smooth Schwartz kernel. Clearly $S_{2}f \in L^{2}$ if $f \in \mathcal{E}'(\Omega)$. Furthermore, for each $k \in \mathbb{N}$, we can estimate
\begin{equation}
\|f\|_{C^{k}(K)}= \|S_{2}f\|_{C^{k}(K)} \lesssim \mathrm{sup}_{y\in \Omega}\left( \|S_{2}(\cdot, y)\|_{C^{k}(K)} \right) \|f\|_{L^{2}(\Omega)}.
\end{equation}
Therefore, $f \in C^{\infty}(\Omega)$. Finally, the fact that $I+S_{2}$ is Fredholm implies that its kernel is finite dimensional. This in turn implies that $\mathrm{Ker}(I_{\sigma, E, \alpha})$ is finite dimensional.\end{proof}

\section{Reducing the Smoothness Requirement on $\sigma$, $\alpha$ \label{sec:reducingsmoothness}}
Given a choice of smooth cutoff $\alpha$ and a smooth $\sigma$ vanishing near $\partial \Omega$ and such that $I_{\sigma, E, \alpha}$ is injective, we would like to be able to perturb $\alpha$ and $\sigma$ in $C^{2}$ slightly and still have $N_{\sigma, E,\alpha}$ be injective. We do this according to the following modified version of (\cite{xraygeneric}, Proposition 4).
\begin{proposition}Assume that $\sigma, \alpha$ are fixed and belong to $C^{2}$. Let $(\sigma', \alpha')$ be $O(\delta)$ close to $(\sigma, \alpha)$ in $C^{2}$. Then there exists a constant $C > 0$ that depends on an a priori bound on the $C^{2}$ norm of $(\sigma, \alpha)$ such that
\begin{equation}
\left\| (N_{\sigma', E, \alpha'} - N_{\sigma, E, \alpha})f \right\|_{H^{1}(\Omega)} \leq C \delta \|f\|_{L^{2}(K)}. 
\end{equation}\label{prop:normalperturb}
\end{proposition}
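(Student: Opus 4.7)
The plan is to bound the perturbation separately on each of the two pieces in the decomposition
\[
N_{\sigma,E,\alpha} = N_{\sigma,E,\alpha,ballistic} + N_{\sigma,E,\alpha,reflect}
\]
furnished by Proposition \ref{prop:normaldecomposition}, and to track how each piece depends on $(\sigma,\alpha)$. Throughout, I will use that $\sigma$ vanishes in a fixed neighborhood of $\partial\Omega$, so that all weight functions $w_j$, $w_{reg,j}$ depend smoothly (in fact, analytically) on $\sigma$ in the $C^2$ topology, uniformly in the relevant finite set of reflection indices determined by $N_{max}$. Similarly, the composite geometric quantities $z_{\pm j}(x,\theta)$, $\theta_{\pm j}(x,\theta)$, $q_j(x,\theta)$, and the unfolded point map $\widetilde{y}_{j_1,j_2}$ depend only on the geometry of $\Omega$, not on $(\sigma,\alpha)$, and are smooth wherever they appear in the formulas, after the regularization discussed in \S \ref{sec:normaloperatorcompute} and \S \ref{sec:corners}.

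For the ballistic piece, recall from \S \ref{sec:normaloperatorcompute} that $N_{\sigma,E,\alpha,ballistic}$ is a classical $\Psi$DO of order $-1$ whose full symbol is built, term by term, from the even parts of products $|\alpha_k|^2 w_j(x,\theta)w_j(x+t\theta,\theta)$ integrated on the cosphere in $\theta$. After the change of variables $s = t + \tau_-(x,\theta)$, its Schwartz kernel has the form $K_{\sigma,\alpha}(x,y) = A_{\sigma,\alpha}(x,y)/|x-y|$ with $A_{\sigma,\alpha}$ supported in a fixed compact set of $\overline{\Omega}\times \overline{\Omega}$ and depending at least $C^2$-smoothly on $(\sigma,\alpha)$. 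A standard mapping property for such kernels (e.g.\ via the $S^{-1}_{1,0}$ calculus or direct estimate on the Schwartz kernel) bounds $\|N_{\sigma,E,\alpha,ballistic}\|_{L^2(K) \to H^1(\Omega)}$ by finitely many $C^2$-seminorms of $A_{\sigma,\alpha}$. By the chain rule, the difference $A_{\sigma',\alpha'} - A_{\sigma,\alpha}$ is $O(\delta)$ in $C^2$, which yields
\[
\bigl\| \bigl(N_{\sigma',E,\alpha',ballistic} - N_{\sigma,E,\alpha,ballistic}\bigr)f \bigr\|_{H^{1}(\Omega)} \leq C\delta \|f\|_{L^{2}(K)},
\]
with $C$ depending only on an a priori $C^2$ bound for $(\sigma,\alpha)$.

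For the reflected piece, the Schwartz kernel is the explicit sum (\ref{eq:normalreflectedterm}), or more generally (\ref{eq:normalreflectbigkahuna}) in the presence of corner points. In either case the kernel is compactly supported in $\Omega\times\Omega$ away from the diagonal (more precisely, bounded below by $d(\mathrm{supp}(f),\partial\Omega) > 0$), so the factor $1/|\widetilde{y}_{j_1,j_2}-x|$ together with all the $\alpha$'s, $w$'s, $q_j$'s and $S^{-1}_{h,v}$ are $C^2$-smooth on the support of the integrand, jointly as a function of $(x,y)$ and of the parameters $(\sigma,\alpha)$. Hence $K^{refl}_{\sigma',\alpha'} - K^{refl}_{\sigma,\alpha}$ is of size $O(\delta)$ in $C^{2}_0(\Omega\times\Omega)$, and this gives the same $L^2(K)\to H^1(\Omega)$ bound with constant $C\delta$ by direct differentiation under the integral sign.

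Summing the two estimates completes the proof. The only genuinely delicate point, and the one that requires the $C^2$ (as opposed to mere $C^0$) hypothesis, is the ballistic part: one must verify that the $C^2$ dependence of $A_{\sigma,\alpha}$ on the parameters is preserved by all of the composite maps (billiard reflections, unfoldings, and the regularization using $q_j$), and that this suffices to control the $L^2\to H^1$ operator norm of the corresponding order $-1$ pseudodifferential operator. This is the main obstacle, but it is essentially bookkeeping once one notes that for fixed geometry the parameter-dependence enters through smooth substitutions into the weights $w_j$, $w_{reg,j}$, and the cutoffs $\alpha_k$.
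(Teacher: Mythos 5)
Your proposal is correct and follows essentially the same route as the paper: split $N_{\sigma,E,\alpha}$ into the ballistic and reflected parts, control the ballistic difference by the weakly singular kernel estimate of Proposition 4 in \cite{xraygeneric} (whose content you reconstruct rather than cite), and control the reflected difference via the smooth, $O(\delta)$-in-$C^2$ dependence of its compactly supported Schwartz kernel on $(\sigma,\alpha)$. The extra detail you supply about the kernel form $A_{\sigma,\alpha}(x,y)/|x-y|$ and the $C^2$-seminorm bound is exactly what the paper delegates to the cited reference.
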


\begin{proof}
Recall that $N_{\sigma,E,\alpha} = N_{\sigma, E, \alpha, ballistic} + N_{\sigma, E, \alpha, reflect}$, where $N_{\sigma, E, \alpha, ballistic}$ is a weakly singular integral operator with smooth kernel and $N_{\sigma, E, \alpha, reflect}$ is smoothing. By the discussion in the proof of Proposition 4 of \cite{xraygeneric}, we have 
\begin{equation}
\|N_{\sigma, E, \alpha, ballistic} - N_{\sigma', E, \alpha', ballistic}\|_{H^{1}(\Omega)} \leq C \delta \|f\|_{L^{2}(K)}.
\end{equation}
Similarly, the Schwartz kernels of $N_{\sigma, E, \alpha, reflect}$ depend smoothly on $\sigma$ and $\alpha$ (see (\ref{eq:normalreflectedterm}) and (\ref{eq:normalreflectbigkahuna})), so the same estimate holds for this term. The completes the proof.
\end{proof}

\begin{proof}[Proof of Theorem \ref{thm:stabilityestimate}] First note that if $I_{\sigma, E, \alpha}: L^{2}(K) \to L^{2}(\Gamma_{-})$ is injective, then $N_{\sigma, E, \alpha}:L^{2}(K)\to H^{1}(\Omega)$ is also injective by an integration by parts. Next, since $N_{\sigma, E, \alpha, ballistic}$ is elliptic on $\mathcal{M}$, we may apply a parametrix $Q$ to get the equation $QN_{\sigma, E, \alpha}f = f + S_{1}f + QN_{\sigma, E, \alpha, reflect}f$. Note that $S_{1} + QN_{\sigma, E, \alpha, reflect}$ is compact. So by elliptic regularity we have the estimate
\begin{equation*}
\|f\|_{L^{2}(K)} \leq C( \| N_{\sigma, E, \alpha}f\|_{H^{1}(\Omega)}  + \|(S_{1} + QN_{\sigma, E, \alpha, reflect})f\|_{L^{2}(\Omega)}).
\end{equation*}
By Lemma 2 of \cite{stefanov1}, the compactness of the second term on the right implies that
\begin{equation*}
\|f\|_{L^{2}(K)} \leq C'\|N_{\sigma, E, \alpha}f\|_{H^{1}(\Omega)}.
\end{equation*}
The other inequality is obvious.

Finally, to prove part (b), we just use Proposition \ref{prop:normalperturb} to get
\begin{align*}
\|f\|_{L^{2}(K)} & \leq C \|N_{\sigma_{0}, E, \alpha^{0}}f \|\\
& \leq C\|N_{\sigma_{0}, E, \alpha^{0}}f\|_{H^{1}(\Omega)} - C\|(N_{\sigma_{0},E,\alpha^{0}} - N_{\sigma, E, \alpha})f\|_{H^{1}(\Omega)}\\
& \leq  C\|N_{\sigma_{0}, E, \alpha^{0}}f\|_{H^{1}(\Omega)} - C^{2}\delta \|f\|_{L^{2}(K)}.
\end{align*}
\end{proof}

\section{Discussion}
In this work, we have utilized the machinery of \cite{xraygeneric} for the X-ray transform with generic weights and applied it to the given broken ray transform on the Euclidean square. The technique of path unfolding can be applied to any polyhedral domain, however the square is very nice in that reflections are well-defined even at corner points. The unfolding approach also placed significant restraints on the attenuation coefficient $\sigma$, which is why we could only reasonably tackle the case $\sigma = 0$ and then use the normal operator computation to extend the results to $C_{0}^{2}$ perturbations. It is perhaps of interest to carry out similar computations for the broken ray transform on higher dimensional polyhedral domains, or domains where $\partial \Omega \setminus E$ is flat.  One interesting problem to consider would be to try to get analogous microlocal results for the unit ball. However, one potential obstacle for strictly convex smooth domains is that a beam of collimated rays, upon reflection becomes uncollimated. This makes it difficult to obtain a nice integral form of the normal operator. Regardless, it is the author's hope that more progress can be made on this particular problem and its variants in the future.

\nocite{*}
\bibliographystyle{alpha}
\bibliography{reflectedraynew}

\begin{thebibliography}{Nov02b}

\bibitem[Bal04]{balradon}
G.~Bal.
\newblock On the attenuated radon transform with full and partial measurements.
\newblock {\em Inverse Problems}, 20:399--419, 2004.

\bibitem[BS04]{boman}
J.~Boman and J.O. Str\"omberg.
\newblock Novikov's inversion formula for the attenuated radon transform - a
  new approach.
\newblock {\em The Journal of Geometric Analysis}, 14(2), 2004.

\bibitem[Cha06]{chappa}
E.~Chappa.
\newblock On the characterization of the kernel of the geodesic x-ray
  transform.
\newblock {\em Transactions of the American Mathematical Society},
  358(11):4793--4807, 2006.

\bibitem[CS96]{choulli2}
M.~Choulli and P.~Stefanov.
\newblock Inverse scattering and inverse boundary value problems for the linear
  {B}oltzmann equation.
\newblock {\em Comm. Partial Differential Equation}, 21(5-6):763--785, 1996.

\bibitem[CS98]{choulli1}
M.~Choulli and P.~Stefanov.
\newblock An inverse boundary value problem for the stationary transport
  equation.
\newblock {\em Osaka J. Math.}, 36(1):87--104, 1998.

\bibitem[Esk04]{eskin}
G.~Eskin.
\newblock Inverse boundary value problems in domains with several obstacles.
\newblock {\em Inverse Problems}, 20(5):1497--1516, 2004.

\bibitem[Fin86]{finch1}
D.V. Finch.
\newblock Uniqueness for the attenuated x-ray transform in the physical range.
\newblock {\em Inverse Problems}, 2:197--203, 1986.

\bibitem[Fin04]{finch2}
D.V. Finch.
\newblock {\em The attenuated X-ray transform: Recent developments}.
\newblock MSRI Publications. Cambridge University Press, Cambridge, UK, 2004.
\newblock in Inside Out, Inverse Problems, and Applications, G. Uhlmann, ed.

\bibitem[FSU08]{xraygeneric}
B.~Frigyik, P.~Stefanov, and G.~Uhlmann.
\newblock The x-ray transform for a generic family of curves and weights.
\newblock {\em J. Geom. Anal.}, 18(1):81--97, 2008.

\bibitem[GS94]{grigis}
A.~Grigis and J.~Sj\"ostrand.
\newblock {\em Microlocal Analysis for Differential Operators: An
  Introduction}, volume 196 of {\em London Mathematical Society Lecture Note
  Series}.
\newblock Cambridge University Press, New York, 1994.

\bibitem[H\"83]{hormander}
L.~H\"ormander.
\newblock {\em The Analysis of Linear Partial Differential Operators}, volume
  1-3.
\newblock Springer-Verlag, New York, 1983.

\bibitem[Hub11]{hubenthal}
M.~Hubenthal.
\newblock An inverse source problem in radiative transfer with partial data.
\newblock {\em Inverse Problems}, 27(12), 2011.

\bibitem[Ilm12]{joonas}
J.~Ilmavirta.
\newblock Broken ray tomography in the disk.
\newblock arXiv:1210.4354 [math-ph], 2012.

\bibitem[KS12]{salokenig}
C.~Kenig and M.~Salo.
\newblock The calder\'on problem with partial data on manifolds and
  applications.
\newblock arXiv:1211.1054 [math.AP], 2012.

\bibitem[Muk77]{mukhometov}
R.G. Mukhometov.
\newblock The reconstruction problem of a two-dimensional riemannian metric,
  and integral geometry (russian).
\newblock {\em Dokl. Akad. Nauk SSSR}, 232(1):32--35, 1977.

\bibitem[Nat01]{natterer}
F.~Natterer.
\newblock Inversion of the attenuated radon transform.
\newblock {\em Inverse Problems}, 17:113--119, 2001.

\bibitem[Nov02a]{novikov}
R.G. Novikov.
\newblock An inversion formula for the attenuated x-ray transformation.
\newblock {\em Ark. Mat.}, 40(1):145--167, 2002.

\bibitem[Nov02b]{novikov2}
R.G. Novikov.
\newblock On the range characterization for the two-dimensional attenuated
  x-ray transformation.
\newblock {\em Inverse Problems}, 18, 2002.

\bibitem[Qui93]{quinto}
E.T. Quinto.
\newblock Singularities of the x-ray transform and limited data tomography in
  $\rn{2}$ and $\rn{3}$.
\newblock {\em SIAM J. Math. Anal.}, 24:1215--1225, 1993.

\bibitem[Qui05]{quinto2}
E.T. Quinto.
\newblock An introduction to x-ray tomography and radon transforms.
\newblock In {\em Symposia in Applied Mathematics}, 2005.

\bibitem[Ste08]{stefanovrio}
P.~Stefanov.
\newblock Microlocal approach to tensor tomography and boundary and lens
  rigidity.
\newblock {\em Serdica Math. J.}, 34(1):67--112, 2008.
\newblock An extended version of the mini-course I gave at the Symposium on
  Inverse Problems, Rio De Janiero, Jan. 2007.

\bibitem[SU04]{xraytensor}
P.~Stefanov and G.~Uhlmann.
\newblock Stability estimates for the x-ray transform of tensor fields and
  boundary rigidity.
\newblock {\em Duke Math. J.}, 123:445--467, 2004.

\bibitem[SU05]{stefanov1}
P.~Stefanov and G.~Uhlmann.
\newblock Boundary rigidity and stability for generic simple metrics.
\newblock {\em J. Amer. Math. Soc.}, 18(4):975--1003, 2005.

\bibitem[SU08]{inversesource}
P.~Stefanov and G.~Uhlmann.
\newblock An inverse source problem in optical molecular imaging.
\newblock {\em Analysis and PDE}, 1(1):115--126, 2008.

\bibitem[SU12]{causticxray}
P.~Stefanov and G.~Uhlmann.
\newblock The geodesic x-ray transform with fold caustics.
\newblock {\em Analysis and PDE}, 5(2):219--260, 2012.

\bibitem[Tab]{serge2}
S.~Tabachnikov.
\newblock Billiards.
\newblock http://www.math.psu.edu/tabachni/Books/billiardsbook.pdf.

\bibitem[Tab05]{serge}
S.~Tabachnikov.
\newblock {\em Geometry and Billiards}.
\newblock American Mathematical Society, 2005.

\end{thebibliography}

\end{document}